\newcommand{\N}{\mathbb{N}}
\newcommand{\R}{\mathbb{R}}
\newcommand\1{{\mathbf 1}}
\newcommand{\eps}{\varepsilon}
\newcommand{\x}[1]{x^{(#1)}}
\newcommand{\binvar}[1]{\{ 0,1 \}^{#1}}
\newcommand{\conv}[1]{\convexh\left( #1\right)}
\DeclareMathOperator*{\argmax}{arg\,max} 
\DeclareMathOperator*{\convexh}{conv}
\DeclareMathOperator*{\opt}{opt}
\DeclareMathOperator*{\app}{approx}
\DeclareMathOperator*{\adapt}{adapt}
\newtheorem{theorem}{Theorem}
\newtheorem{lemma}[theorem]{Lemma}
\newtheorem{remark}[theorem]{Remark}
\newtheorem{corollary}[theorem]{Corollary}
\newtheorem{proposition}[theorem]{Proposition}
\newtheorem{example}[theorem]{Example}
\providecommand{\keywords}[1]{\textit{#1}}
\begin{document}
\title{Approximation Algorithms for Min-max-min Robust Optimization and K-Adaptability under Objective Uncertainty}
\author[1]{Jannis Kurtz\footnote{j.kurtz@uva.nl}}
\affil[1]{Amsterdam Business School, University of Amsterdam, 1018 TV Amsterdam, Netherlands }

\date{}

\maketitle

\begin{abstract}
In this work we investigate the min-max-min robust optimization problem and the $k$-adaptability robust optimization problem for binary problems with uncertain costs. The idea of the first approach is to calculate a set of $k$ feasible solutions which are worst-case optimal if in each possible scenario the best of the $k$ solutions is implemented. It is known that the min-max-min robust problem can be solved efficiently if $k$ is at least the dimension of the problem, while it is theoretically and computationally hard if $k$ is small. However, nothing is known about the intermediate case, i.e. $k$ lies between one and the dimension of the problem. We approach this open question and present an approximation algorithm which achieves good problem-specific approximation guarantees for the cases where $k$ is close to or where $k$ is a fraction of the dimension. The derived bounds can be used to show that the min-max-min robust problem is solvable in oracle-polynomial time under certain conditions even if $k$ is smaller than the dimension. We extend the previous results to the robust $k$-adaptability problem. As a consequence we can provide bounds on the number of necessary second-stage policies to approximate the exact two-stage robust problem. We derive an approximation algorithm for the $k$-adaptability problem which has similar guarantees as for the min-max-min problem. Finally, we test both algorithms on knapsack and shortest path problems and related two-stage variants. The experiments show that both algorithms calculate solutions with relatively small optimality gap in seconds.

\vspace{0.1cm}
\keywords{Robust Optimization \and Min-max-min \and K-adaptability \and Approximation \and Algorithm}
\end{abstract}

\section{Introduction}
Integer optimization problems nowadays emerge in many industries as production, health care, disaster management or transportation, just to name a few. The latter problems are tackled by companies, non-profit organizations or governmental institutions and providing good solutions is a highly relevant topic in our society. Usually solving optimization problems in practice involves uncertainties which have to be incorporated into the optimization model. Typical examples are uncertain traffic situations, demands or failures of a network. The optimization literature provides several ways to model uncertainties, e.g. stochastic optimization \cite{birge2011introduction}, robust optimization \cite{ben2009robust} or distributionally robust optimization \cite{wiesemann2014distributionally,goh2010distributionally}. 

In \textit{robust optimization} we assume that all possible realizations of the uncertain parameters are contained in a given uncertainty set and the aim is to find a solution which is optimal in the worst-case and feasible for all possible scenarios in the uncertainty set. The robust optimization approach was intensively studied for convex and discrete uncertainty sets; see e.g. \cite{roconv,bentalRobust,bertsimas04priceofrobustness,bertsimas2003combinatorial,kouvelis2013robust,aissi_minmax_survey,buchheimrobust}. Despite its success it can be too conservative since the calculated robust solution has to hedge against all scenarios in the given uncertainty set, which can lead to bad performances in the mean scenario. To overcome this problem several new robust models have been introduced; see e.g. \cite{ben2004adjustable,fischetti2009light,liebchen2009concept,adjiashvili2015bulk}.

In this work we study the \textit{min-max-min robust optimization problem}, which was first introduced in \cite{buchheim2016min} to overcome the conservativeness of the classical robust approach. We consider deterministic optimization problems
\begin{equation}\label{eq:min}\tag{P}
\min_{x\in X} c^\top x
\end{equation}
where $X\subseteq\binvar{n}$ is the set of incidence vectors of all feasible solutions and $c\in\R_+^n$ is an uncertain cost-vector which we assume is contained in a given convex uncertainty set $U\subseteq\R_+^n$. Note that most of the combinatorial problems can be modeled as binary problems and that the restriction to binary variables is theoretically no limitation as general integer variables can be modeled by binary variables using binary encoding. 

Similar to the idea of k-adaptability (\cite{bertsimas2010finite,hanasusanto2015k,subramanyam2019k}) the main idea of the min-max-min robust optimization problem is to hedge against the uncertainty in the cost-vector in a robust manner, i.e. considering the worst-case costs over all scenarios in $U$, while providing more flexibility compared to the classical robust approach since multiple solutions are calculated and can be used to react to the emerging uncertain scenarios. In contrast to the k-adaptability problem the min-max-min robust approach does not consider two-stage structures and is therefore tailored for binary problems where no second-stage decisions exist, i.e. where a set of complete solutions has to be prepared in advance. This can be inevitable in many applications regarding the construction of transportation plans, e.g. evacuation plans for buildings, airports or cities \cite{cepolina2005methodology,rungta2012optimal,mishra2015improved,pillac2016conflict} or route plans for robot systems \cite{skutella2011route}. Note that the restriction to objective uncertainty is still an interesting case since despite many combinatorial problems, also real-world problems appear which can be modeled by objective uncertainty (see e.g. \cite{stienen2021optimal}). Furthermore recent works on two-stage robust optimization indicate that it is possible to use Lagrangian relaxation to shift uncertain constraints into the objective function; see \cite{subramanyam2022lagrangian,lefebvreadaptive}.

 
Since 2015 the min-max-min robust approach was studied for several uncertainty sets and combinatorial problems. It was studied for convex uncertainty sets in \cite{buchheim2016min,buchheim2016conference} and for discrete uncertainty sets in \cite{JannisDiscrete}. Regarding its complexity and solvability the min-max-min robust problem is a very interesting problem due to the unusual connection between its problem parameters and its complexity. A reasonable assumption would be that the problem gets harder to solve with increasing $k$. However this is not true from a theoretical as well as from a computational point of view. While for discrete uncertainty sets it is weakly or strongly NP-hard for most of the classical combinatorial problems \cite{JannisDiscrete}, in the case of convex uncertainty the problem can be solved in polynomial time if $k\ge n$ and if \eqref{eq:min} can be solved in polynomial time \cite{buchheim2016min}. On the other hand it is NP-hard for each fixed $k\in\N$ even if $U$ is a polyhedron. The authors in \cite{buchheim2016min} present an efficient algorithm for the case $k\ge n$ and a fast heuristic for each fixed $k\in \N$. Later in \cite{chassein2020complexity} it was shown that the problem can be solved in polynomial time for several combinatorial problems if $U$ is a convex budgeted uncertainty set and $k=2$. In \cite{ChasseinGKP19} faster exact and heuristic algorithms for the same uncertainty set were presented. For the discrete budgeted uncertainty set the authors in \cite{goerigk2020min} derive exact and fast heuristic algorithms and show that the problem is weakly or strongly NP-hard for most of the classical combinatorial problems. For binary uncertainty sets defined by linear constraints it was shown in \cite{claus2020note} that the min-max-min robust problem is $\Sigma_2^p$-complete. Recently an efficient exact algorithm based on smart enumeration was derived in \cite{arslan2020min} for problems where $X$ does not contain too many good solutions. In \cite{eufinger2020robust} the min-max-min robust problem was applied to the vehicle routing problem where a set of $k$ possible routes has to be prepared in advance which are robust to uncertain traffic scenarios. The idea of the min-max-min robust approach was also applied to the regret robust approach in \cite{crema2020min}.

However, no efficient algorithms are known which solve the min-max-min problem for any $k\in \mathbb N$ efficiently with a certain approximation guarantee. Furthermore nothing is known about the complexity of the problem if $k$ has intermediate size.

As mentioned above the min-max-min problem has a similar structure as the \textit{k-adaptability problem} which was introduced in \cite{bertsimas2010finite} to approximate two-stage robust optimization problems with integer second-stage variables (recently a short note on the incorrectness of the continuity assumption made in the latter paper appeared \cite{kedad2023continuity}). The idea is to calculate $k$ second-stage policies already in the first stage and choose the best of it after the scenario is revealed. This idea provides a heuristic solution to the exact two-stage robust problem and no approximation guarantees were known so far. Due to the similar structure, algorithmic ideas from the k-adaptability literature can also be applied to the min-max-min problem. In \cite{hanasusanto2015k} a mixed-integer programming formulation was derived to solve the k-adaptability problem. Later in \cite{subramanyam2019k} the authors present an algorithm based on a branch~\&~bound scheme which iteratively constructs partitions of scenarios. In \cite{ghahtarani2023double} a logic-based benders decomposition approach is used where the main problem calculates a first-stage solution while a subproblem is used to derive cuts on the objective value. Interestingly the subproblem is a classical min-max-min problem and the authors use methods from the min-max-min literature to solve it. Finally in \cite{bertsimas2011geometric} geometric properties of the uncertainty set are used to derive approximation guarantees for the $k$-adaptability problem with right-hand side uncertainty.

From a theoretical perspective there is not much known about the approximation guarantees the $k$-adaptability approach provides for the two-stage robust problem. The only result into that direction was proved in \cite{hanasusanto2015k} where it is shown that the $k$-adaptability problem provides an optimal solution of the exact two-stage problem if the number of second-stage policies is at least the dimension of the problem. We will close this gap in this work, providing better bounds on $k$ which lead to a certain problem-specific approximation guarantee. Furthermore we provide the first approximation algorithms with a provable approximation guarantee for the $k$-adaptability problem.

\paragraph*{Contributions}
\begin{itemize}
\item  We provide efficient algorithms to calculate solutions with provable additive and multiplicative approximation guarantees for the cases where a) $k$ is smaller but close to $n$ and b) $k$ is a fraction of $n$. The derived guarantees hold for a wide class of binary problems and involve a problem-specific parameter. 
\item We use the approximation guarantees to derive ranges for parameter $k$ for which the algorithm provides a certain approximation guarantee.
\item We show for the first time that under certain assumptions the min-max-min robust problem remains oracle-polynomial solvable if $k=n-l$ and $l$ is a fixed parameter. 
\item We extend the derived approximation guarantees to the $k$-adaptability approach and show that they can be used to calculate better bounds for the number of second-stage policies $k$ which are necessary to achieve a certain approximation guarantee for the exact two-stage robust problem. 
\item We derive an efficient approximation algorithm which calculates solutions for the $k$-adaptability problem with a certain approximation guarantee.
\item We perform experiments to test both approximation algorithms on knapsack and shortest path instances and on a generic $k$-adaptability problem and a two-stage network construction shortest path problem.
\end{itemize}  
The paper is organized as follows. In Section \ref{sec:preliminaries} we provide preliminary results on min-max-min robustness and $k$-adaptability. In Section \ref{sec:kcloseton} we provide the approximation algorithm and derive additive and multiplicative approximation guarantees depending on $n$ and $k$. In Section \ref{sec:k-adaptability} we extend the derived results to the $k$-adaptability approach and provide better bounds on the parameter $k$ to achieve a certain approximation guarantee. Afterwards we present an approximation algorithm and prove its approximation guarantee. Finally in Section \ref{sec:computations} we show the results of our computational experiments and give a conclusion in Section \ref{sec:conclusion}.

\section{Preliminaries}\label{sec:preliminaries}
\subsection{Notation}
We define $[k]:=\left\{ 1,\ldots ,k\right\}$ for $k\in \N$ and $\R_+^n=\{ x\in\R^n: x\ge 0\}$. We denote by $\|x\|:=\sqrt{\sum_{i\in [n]} x_i^2}$ the euclidean norm and by $\|x\|_\infty :=\max_{i\in [n]}|x_i|$ the maximum norm. The convex hull of a finite set $S=\{s^1,\ldots , s^m\}$ is denoted by
\[
\conv{S}= \left\{ s=\sum_{i\in[m]}\lambda_i s^i : \lambda_i\ge 0 \ \forall i\in[m], \ \sum_{i\in[m]}\lambda_i=1\right\}.
\]
The vector of all ones is denoted by $\1$ and the $i$-th unit vector by $e_i$.
\subsection{Min-max-min Robust Optimization}
Formally the min-max-min robust optimization problem is defined as
\begin{equation}\label{eq:minmaxmin}\tag{M$^3(k)$}
\min_{\x{1},\ldots ,\x{k}\in X} \max_{c\in U} \min_{i=1,\ldots ,k} c^\top \x{i}
\end{equation}
where $X\subseteq \{ 0,1\}^n$, $U\subseteq \R_+^n$ is a convex uncertainty set and $k\in\mathbb N$ is a given parameter controlling the conservativeness of the problem. In \cite{buchheim2016min} the authors study Problem \eqref{eq:minmaxmin} for convex uncertainty sets $U$ and show, by using lagrangian relaxation, that Problem \eqref{eq:minmaxmin} for any $k\in\N$ is equivalent to problem
\begin{equation}\label{eq:minmaxreformulation}
\min_{x\in X(k)}\max_{c\in U} c^\top x
\end{equation}
where $X(k)$ is the set of all convex combinations derived by at most $k$ solutions in $X$, i.e.
\[
X(k):=\left\{ x\in\R^n: \ x=\sum_{i\in [k]}\lambda_i\x{i}, \ \x{i}\in X, \ \lambda\in \R_+^k, \ \sum_{i\in [k]}\lambda_i=1\right\} .
\]
By the theorem of Carath\'{e}odory it follows that each point in $\conv{X}$ can be described by a convex combination of at most $n+1$ points in $X$, therefore it holds $X(k)=\conv{X}$ for all $k\ge n+1$. Since for any given point $x\in\conv{X}$ and $\mu\in\R_+$ we have
\[
\max_{c\in U} c^\top (\mu x) = \mu \max_{c\in U} c^\top x
\]
an optimal solution is always attained on the boundary of $\conv{X}$ if $k\ge n+1$, i.e. can be described by a convex combination of at most $n$ solutions in $X$. It follows that for each $k\ge n$ Problem \eqref{eq:minmaxmin} is equivalent to the problem
\begin{equation}\label{eq:minmaxconv}
\min_{x\in \conv{X}}\max_{c\in U} c^\top x .
\end{equation}
From the latter result we obtain the following chain of optimal values, where we denote by $\opt(k)$ the optimal value of $\eqref{eq:minmaxmin}$ with $k$ solutions:
\[
\opt(1)\ge \opt(2)\ge \ldots \ge \opt(n)=\opt(n+1)=\ldots .
\]
Note that $\opt(1)$ is equal to the optimal value of the classical robust problem.

For an optimal solution $x^*$ of Problem \eqref{eq:minmaxconv} the corresponding optimal solution $\x{1},\ldots ,\x{n}$ of Problem (M$^3(n)$) can be calculated in polynomial time, if we can linearly optimize over $X$ in polynomial time; see \cite{grotschel2012geometric,buchheim2016min} for more details.

Problem \eqref{eq:minmaxconv} is a convex problem, since the objective function $f(x):= \max_{c\in U} c^\top x$ is convex and $\conv{X}$ is a convex set. Unfortunately for many classical combinatorial problems no outer-description of polynomial size for $\conv{X}$ is known. Nevertheless the authors in \cite{buchheim2016min} prove that Problem \eqref{eq:minmaxconv} and therefore the min-max-min robust problem can be solved in polynomial time if the underlying deterministic problem \eqref{eq:min} can be solved in polynomial time. More precisely the theorem states that, if we can linearly maximize over $U$ in polynomial time and if we can linearly minimize over $X$ in polynomial time, then we can solve the min-max-min robust problem in polynomial time. However the proof in \cite{buchheim2016min} is not constructive, i.e. no implementable algorithm with a polynomial runtime guarantee is presented. Instead the authors present a column-generation algorithm to solve Problem \eqref{eq:minmaxmin} for $k\ge n$ where iteratively the deterministic problem \eqref{eq:min} and an adversary problem over $U$ is solved. They show that this algorithm is very efficient on random instances of the knapsack problem and the shortest path problem. Furthermore the same algorithm was used successfully in \cite{eufinger2020robust} for the min-max-min version of the capacitated vehicle routing problem. We will adapt this algorithm to find good approximate solutions and calculate strong lower bounds for Problem \eqref{eq:minmaxmin} for any $k$.

On the other hand it is shown in \cite{buchheim2016min} that Problem~\eqref{eq:minmaxmin} with an uncertain constant is NP-hard for any fixed~$k\in\N$, even if~$U$ is a polyhedron given by an inner description and~$X=\{0,1\}^n$. This result fits to the computational experience which was made in other publications, where it turns out that Problem \eqref{eq:minmaxmin} is very hard to solve for small $k\in \N$; see \cite{arslan2020min,ChasseinGKP19}. Nevertheless to tackle the problem even for small $k$ in \cite{buchheim2016min} the authors present an heuristic algorithm which is based on the column-generation algorithm mentioned above. The idea is to solve Problem \eqref{eq:minmaxmin} for $k=n$ with this algorithm and afterwards select $k$ of the calculated solutions with largest induced weights, given by the optimal convex combination of Problem \eqref{eq:minmaxconv}. It is shown computationally that this heuristic calculates solutions which are very close to the optimal value of Problem \eqref{eq:minmaxmin}. We will present a theoretical understanding of this behavior in Section \ref{sec:kcloseton} for the first time and provide approximation bounds.

\subsection{Two-stage robust optimization and k-adaptability}
The k-adaptability problem was introduced with the goal to approximate two-stage robust problems with second-stage integer variables in \cite{bertsimas2010finite}. We define the two-stage robust problem with objective uncertainty as
\begin{equation}\label{eq:two-stage_problem}\tag{2RO}
\min_{x\in X}\max_{\xi\in U}\min_{y\in Y(x)} \ d^\top x + \xi^\top y
\end{equation}
where $X\subseteq \R^m$, $Y(x)\subseteq \{ 0,1\}^n$ for all $x\in X$ and $U\subseteq \R_+^n$ is a convex uncertainty set. The variables $x$ are the first-stage decisions which have to be taken before the uncertainty reveals. The variables $y$ are second-stage decisions which can be taken after the uncertain parameters $\xi$ are known. Note that the feasible set $Y(x)$ depends on the taken first-stage decision. While the following results hold for the more general case, in the linear case often the second-stage feasibility set is given as
\[
Y(x)=\left\{ y\in Y\subseteq\{ 0,1\}^n \ | \ Tx + Wy \le h\right\}
\]
for matrices $T$, $W$ and vector $h$ of appropriate size. The vector $d\in\R_+^m$ contains the given first-stage costs which we assume to be not uncertain, although all the following results are still valid otherwise. 

The $k$-adaptable problem was already studied in \cite{bertsimas2010finite,hanasusanto2015k,subramanyam2019k,ghahtarani2023double}. The idea is to calculate a set of $k$ second-stage solutions $y^1,\ldots ,y^k\in Y(x)$ already in the first-stage and select the best one for each scenario $\xi\in U$. This problem can be formulated as
\begin{equation}\label{eq:k-adaptability}\tag{k-adapt}
\min_{\substack{x\in X, \\ y^1,\ldots , y^k \in Y(x)}}\max_{\xi\in U}\min_{i=1,\ldots ,k} \ d^\top x + \xi^\top y^i .
\end{equation}
Note that while the $k$-adaptability problem is complexity-wise harder than the min-max-min problem \eqref{eq:minmaxmin}, both problems have a quite similar structure. Namely, if we fix a first-stage solution $x$, then \eqref{eq:k-adaptability} becomes a classical min-max-min robust problem \eqref{eq:minmaxmin}.

We can reformulate \eqref{eq:k-adaptability} as
\[
\min_{\substack{x\in X, \\ y^1, \ldots, y^k \in Y(x)}}\max_{\xi\in U}\min_{i=1,\ldots ,k} \ d^\top x + \xi^\top y^i = \min_{\substack{x\in X, \\ y^1, \ldots, y^k \in Y(x)}}\max_{\xi\in U}\min_{\substack{\lambda\ge 0, \\ \sum_i \lambda_i = 1}} \ d^\top x + \sum_{i=1}^{k}\lambda_i\xi^\top y^i .
\]
Since the feasible regions of the inner max and the inner min are both convex, and the objective function is linear in both variables, we can swap the maximum and the inner minimum, which leads to the formulation
\[
\min_{\substack{x\in X, \ \lambda\ge 0 \\ y = \sum_{i\in [k]}\lambda_iy^i \\ \sum_{i\in [k]} \lambda_i = 1 \\ y^1, \ldots, y^k \in Y(x)}}\max_{\xi\in U}\ d^\top x + \xi^\top y .
\]
Note that each feasible solution $y$ of the latter problem is a convex combination of $k$ feasible second-stage policies. Since each point in the convex hull can be described by a convex combination of at most $n+1$ points (by Theorem of Carath\'eodory), it follows  from the last formulation that \eqref{eq:k-adaptability} attains the exact optimal value of \eqref{eq:two-stage_problem} for each $k\ge n+1$ which was proved in \cite{hanasusanto2015k}. In the following we denote by $\opt(2RO)$ the optimal value of the two-stage robust problem \eqref{eq:two-stage_problem}. Furthermore we denote by $\adapt(k)$ the optimal value of \eqref{eq:k-adaptability} with $k$ second-stage policies. Note that by the previous analysis it holds $\opt(2RO)=\adapt(n+1)$. Furthermore we obtain the following chain of optimal values:
\[
\adapt(1)\ge \adapt(2)\ge \ldots \ge \adapt(n)\ge \opt(2RO)=\adapt(n+1)=\adapt(n+2)=\ldots
\]
where $\adapt(1)$ is equal to the optimal value of the classical robust problem.

\section{Approximation algorithm for min-max-min robust optimization}\label{sec:kcloseton}
The min-max-min robust problem is known to be easy to solve if $k\ge n$, both theoretically and practically (see \cite{buchheim2016min}), while it is NP-hard for any fixed $k\in \N$ even if $U$ is a polyhedron and $X=\{0,1\}^n$. Moreover recent results indicate that it is computationally very hard to solve \eqref{eq:minmaxmin} exactly for general convex uncertainty sets $U$ even if $k$ is fixed and small, e.g. $k\in\{2,3,4\}$; see \cite{buchheim2016min,ChasseinGKP19,arslan2020min}. However nothing is known for the intermediate setting, i.e. if $k$ is not small but smaller than $n$. Furthermore no algorithms with approximation guarantees are known for the problem. 

In this section we approach this gap by answering the following research questions.
\begin{enumerate}[RQ1.]
\item Can we derive an algorithm with general additive and multiplicative approximation guarantees for Problem \eqref{eq:minmaxmin} for any number of solutions $k$?
\item For which range of $k$ is a certain approximation guarantee valid? 
\item  What is the theoretical complexity of Problem \eqref{eq:minmaxmin} for an intermediate number of solutions $k$?
\end{enumerate}

We first have to define what we mean by ``intermediate size of $k$''. To this end we consider two cases where a) $k=n-l$ for a fixed $l\in [n-1]$ and b) $k=qn$ for a fixed $q\in (0,1)$. Case a) can be interpreted as ``$k$ is close to $n$'' while case b) means ``$k$ is a fraction of $n$'' where the parameter $q$ controls the distance to $1$ and to $n$. 

In Algorithm \ref{alg:approx} we present an efficient algorithm which calculates feasible solutions for \eqref{eq:minmaxmin}. The algorithm was already studied computationally in \cite{buchheim2016min}. In this work we study the approximation performance of Algorithm \ref{alg:approx}. Note that Step \eqref{step:opt_n} can be solved theoretically in oracle-polynomial time in the input values; see \cite{buchheim2016min}. This step can be implemented by using any algorithm for the case $k\ge n$, e.g. the oracle-based column-generation algorithm presented in \cite{buchheim2016min}. In Step \eqref{step:opt_lambda} we then calculate the optimal coefficients of the convex combination of the calculated $n$ solutions and afterwards select only the $k$ solutions with largest coefficients. The maximum expression in the problem of Step \eqref{step:opt_lambda} can be dualized for classical uncertainty sets (e.g. polyhedra or ellipsoids). Hence this step involves solving a continuous convex optimization problem which can be done in polynomial time for polyhedral or ellipsoidal uncertainty sets. In \cite{buchheim2016min} it was shown that the algorithm is computationally very efficient and provides solutions which are often close to optimal. However no theoretical understanding of this behavior is known. In this section we will, for the first time, derive additive and multiplicative problem-specific approximation guarantees for Algorithm \ref{alg:approx}. Furthermore we will show how to calculate ranges for $k$ for which a given approximation guarantee holds. The derived results can be used to show that \eqref{eq:minmaxmin} is actually oracle-polynomial solvable under certain conditions even if $k<n$.

\begin{algorithm}\caption{Approximation Algorithm for \eqref{eq:minmaxmin}}\label{alg:approx}
	\textbf{Input:} $n\in\N$, $k\in [n]$, convex set $U\subset\R^{n}$, $X\subseteq \{0,1\}^n$
	\begin{algorithmic}[1]
		\State calculate an optimal solution $x^1,\ldots ,x^n$ of (M$^3(n)$) \label{step:opt_n}
		\State calculate an optimal solution $\lambda^*$ of \label{step:opt_lambda}
		\begin{align*}
		\min_{\lambda\ge 0} \ & \max_{c\in U} \ c^\top x \\
		s.t. \quad & \sum_{i=1}^{n} \lambda_i=1 \\
		& x=\sum_{i=1}^{n} \lambda_i x^i 
		\end{align*}
		\State sort the $\lambda^*$ values in decreasing order
		\[
		\lambda_{i1}^*\ge \lambda_{i2}^* \ge \ldots \ge \lambda_{in}^*
		\]
		\State \textbf{Return:} $x^{i1},\ldots ,x^{ik}$
	\end{algorithmic}
\end{algorithm}

\begin{remark}[Variant of Algorithm \ref{alg:approx}]\label{rem:variant_alg1}
Note that we can add a sparsity constraint to the problem in Step \ref{step:opt_lambda} which ensures that at most $k$ of the optimal $\lambda_i$ values are non-zero. More precisely we can solve the following problem in Step \ref{step:opt_lambda} in Algorithm \ref{alg:approx}:
\begin{align*}
		\min \ & \max_{c\in U} \ c^\top x \\
		s.t. \quad & \sum_{i=1}^{n} \lambda_i=1 \\
		& x=\sum_{i=1}^{n} \lambda_i x^i \\
		& \lambda_i\le u_i \quad i=1,\ldots ,n\\
		& \sum_{i=1}^{n} u_i \le k \\
		& \lambda\in \R_+^n, u\in \{ 0,1\}^n
\end{align*}
Since the latter problem selects the best $k$ of the $n$ solutions involved in the convex combination, the solution returned by Algorithm \ref{alg:approx} when using the latter problem has an objective value at most as large as the original solution of Algorithm \ref{alg:approx}. Hence all the approximation results shown in this paper also hold for this variant of the algorithm. On the other hand the latter problem is computationally harder since it involves a sparsity constraint. In our experiments in Section \ref{sec:computations} we show that indeed the solutions of the variant have better objective values coming along with a slightly larger computation time. We will denote the variant of the algorithm as Algorithm \ref{alg:approx} (Variant).
\end{remark}
 
In the following we denote the exact optimal value of Problem \eqref{eq:minmaxmin} by $\opt(k)$ and the objective value of the solution returned by Algorithm \ref{alg:approx} as $\app(k)$. We say Algorithm \ref{alg:approx} has an \textit{additive approximation guarantee} of $a: \mathbb N\to \R_+$ if
\[
\app(k) \le \opt(k) + a(n)
\]
for all $n\in \mathbb N$. We say Algorithm \ref{alg:approx} has an \textit{multiplicative approximation guarantee} of $a: \mathbb N\to \R_+$ if
$$
\app(k) \le (1+a(n))\opt(k) 
$$
for all $n\in \mathbb N$. Note that $a$ can also be a constant function, e.g. $a(n)\equiv \eps$ for any $\eps>0$ and we say the algorithm has a \textit{constant approximation guarantee} in this case.

We assume that we have an oracle which returns an optimal solution of the deterministic problem \eqref{eq:min} for each objective cost vector $c\in U$ in constant time. If we speak in the following of an oracle-polynomial algorithm this means that the calculations for the deterministic problem are assumed to be constant.

We assume that $\|c\|_\infty \le M_\infty$ and $\|c\|_\infty\ge m_\infty>0$ for all $c\in U$. While assuming an upper bound on $U$ is more natural, the assumption on the lower bound seems restricting. However all results in Section \ref{sec:additive_errors} can be derived without assuming a lower bound $m_\infty$ leading to non-substantially worse guarantees. For the multiplicative bounds in Section \ref{sec:multiplicative_errors} the assumption is needed since for non-positive objective values the definition of multiplicative approximation guarantees is not well-defined. From a practical point of view the assumption is reasonable since for many combinatorial problems the objective costs can be assumed to be strictly positive.  Note that considering a maximum-norm bound for any $n\in \N$ is less restrictive than using the euclidean-norm since the latter grows with increasing $n$ even if the entries of the scenarios remain of the same size. In the following we denote the number of non-zero entries of $x\in X$ by $\|x\|_0$, i.e.
\[
\|x\|_0:=|\{i\in [n]: x_i=1\}|.
\]
We furthermore assume that for a given problem class there exist functions $\underline{p},\bar p: \mathbb N \to \R_+$ such that for each instance $X$ of the problem class of dimension $n$ it holds $\underline{p}(n)\le \| x\|_0 \le \bar p(n)$ for all $x\in X$. Note that while $\underline p(n)\equiv 1$ and $\bar p(n)= n$ are always valid functions it is possible to derive problem-specific tighter functions as shown in the following proposition. The problem specific functions $\underline{p},\bar p$ will later appear in the derived approximation guarantees.
\begin{proposition}\label{prop:p-functions_examples}
In the following we present functions $\underline p, \bar p$ for a list of combinatorial problems.
\begin{enumerate}[a)]
\item For the \textit{spanning tree problem} defined on a graph $G=(V,E)$ for each solution $x$ we have $\|x\|_0=|V|-1$, i.e. $\underline p(n)=1$ and $\bar p(n)\le n$. If $G$ is a complete graph, then the number of edges is $|V|(|V|-1)$, i.e. for each given $n$ the number of vertices $|V|$ is uniquely defined and we have $\underline p(n)=\bar p(n)\le \sqrt{n}$.
\item For the \textit{matching problem} defined on a graph $G=(V,E)$ for each solution $x$ we have $\|x\|_0\le \frac{1}{2}|V|$, i.e. $\underline p(n)=1$ and $\bar p(n)\le n$. If $G$ is a complete graph (which is the case for the assignment problem), then the number of edges is at most $|V|^2$, i.e. we have $\underline p(n)= \bar p(n)\le \sqrt{n}$.
\item For each \textit{cardinality constrained} problem, i.e. $X_c=\left\{ x\in X \ | \ \sum_i^n x_i=\bar p\right\}$ for a given fixed $\bar p\in [n]$, we have $\underline p(n)=\bar p(n) = \bar p$. One popular example from robust combinatorial optimization is the $p$-selection problem where $X=\{ 0,1\}^n$.
\item For the \textit{traveling salesmen problem} (TSP) defined on a complete graph $G=(V,E)$ for each solution $x$ we have $\|x\|_0 = |V|$, i.e. $\underline p(n)=\bar p(n)\le \sqrt{n}$.
\item For the \textit{vehicle routing problem} (VRP) defined on a complete graph $G=(V,E)$ with $m$ customers and one depot for each solution $x$ we have $\|x\|_0 \le 2m$ (in case each vehicle visits exactly one customer) and $m+1\le \|x\|_0$ (in case only one vehicle visits all customers). Since the graph has $m(m+1)$ edges we have $\bar c \sqrt{n} \le \underline p(n)\le\bar p(n)\le \bar C\sqrt{n}$ for constants $\bar c,\bar C\ge 0$. 
\end{enumerate}
\end{proposition}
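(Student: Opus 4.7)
The plan is to establish each item by directly counting the number of ones in every feasible incidence vector $x$ and then translating that count into a function of the ambient dimension $n$, which in all five cases equals the number of ground-set elements (edges for graph problems, items for selection problems). For the generic-graph parts of (a), (b), and (e), I would simply fix $X\subseteq\{0,1\}^E$, so that $n=|E|$ and $\|x\|_0$ is the number of selected edges; the trivial bounds $\underline{p}(n)=1$ and $\bar p(n)\le n$ are then immediate, since every solution uses between one and $n$ edges.

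The stated structural counts then come from standard facts that I would just invoke: a spanning tree on $|V|$ vertices uses exactly $|V|-1$ edges; any matching uses at most $\lfloor |V|/2\rfloor$ edges; a Hamiltonian tour uses exactly $|V|$ edges; and in a VRP instance with one depot and $m$ customers, a solution in which every vehicle serves exactly one customer uses $2m$ edges (each depot--customer--depot trip contributing two), while a single-vehicle tour through all customers uses $m+1$ edges, giving $m+1\le \|x\|_0\le 2m$. Item (c) is immediate: the defining equality $\sum_i x_i=\bar p$ forces $\|x\|_0=\bar p$ for every feasible $x$, so $\underline p(n)=\bar p(n)=\bar p$ with no further work.

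For the complete-graph refinements in (a), (b), (d), (e), the strategy is identical in each case: adopt the paper's edge-count convention for the complete graph on $|V|$ vertices, so that $n$ determines $|V|$ up to a universal constant via $|V|=O(\sqrt n)$, and then plug this into the structural bounds from the previous paragraph, which are all linear in $|V|$ (respectively in $m=|V|-1$ for the VRP). The only point requiring attention is bookkeeping between $n$, $|E|$, $|V|$, and $m$ and absorbing the resulting $O(1)$ factors into the constants $\bar c, \bar C$ in item (e); there is no genuine obstacle, as the whole statement amounts to recording, for each problem class, an elementary combinatorial identity or inequality on solution size.
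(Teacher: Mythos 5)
Your proposal is correct and matches the paper's reasoning: the paper gives no separate proof for this proposition, since the justification is exactly the elementary edge/item counting you describe (tree has $|V|-1$ edges, matching at most $|V|/2$, tour exactly $|V|$, cardinality constraint forces $\|x\|_0=\bar p$, and on complete graphs $n=\Theta(|V|^2)$ so $|V|=O(\sqrt n)$). There is nothing substantively different between your route and the paper's.
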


\subsection{Additive approximation guarantees}\label{sec:additive_errors}
We first prove the following general lemma which generalizes the result used in the proof of Theorem 6 in~\cite{buchheim2016min}. 

\begin{lemma}\label{lem:bound_opts_optk}
Assume $s,k\in [n]$ where $s<k$, then it holds
\[
\opt(s)-\opt(k)\le M(n) \frac{k-s}{s+1}.
\]
where $M(n):=M_\infty \bar p(n)-m_\infty \underline{p}(n)$.
\end{lemma}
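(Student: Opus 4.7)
The plan is to leverage the convex–combination reformulation from Section~\ref{sec:preliminaries}, namely $\opt(k)=\min_{x\in X(k)}\max_{c\in U}c^\top x$. Fix an optimal decomposition $\bar x=\sum_{i=1}^k \lambda_i x^i\in X(k)$ so that $\opt(k)=\max_{c\in U}c^\top \bar x$, with $\lambda\in\R_+^k$, $\sum_i\lambda_i=1$ and $x^i\in X$. After relabeling I may assume $\lambda_1\ge\lambda_2\ge\cdots\ge\lambda_k$.

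As a feasible candidate for $\opt(s)$ I take the renormalized restriction to the $s$ heaviest atoms, $y:=\tfrac{1}{S}\sum_{i=1}^s\lambda_i x^i$ where $S:=\sum_{i=1}^s\lambda_i$. Clearly $y\in X(s)$, so $\opt(s)\le \max_{c\in U}c^\top y$. Writing $z:=\sum_{i=s+1}^k\lambda_i x^i$ we have $\bar x=Sy+z$, and applying $\max_c(A-B)\le \max_c A-\min_c B$ yields
\[
\max_{c\in U}c^\top y \;\le\; \frac{1}{S}\bigl(\opt(k)-\min_{c\in U}c^\top z\bigr).
\]
The norm assumption $\|c\|_\infty\le M_\infty$ combined with $\|x\|_0\le\bar p(n)$ gives $c^\top x\le M_\infty\bar p(n)$, and (entrywise) $c\ge m_\infty$ together with $\|x\|_0\ge\underline p(n)$ gives $c^\top x\ge m_\infty\underline p(n)$, for every $x\in X$ and $c\in U$. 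Hence $\min_c c^\top z\ge(1-S)\,m_\infty\underline p(n)$ and $\opt(k)\le M_\infty\bar p(n)$. Substituting and simplifying,
\[
\opt(s)-\opt(k)\;\le\;\frac{1-S}{S}\bigl[\opt(k)-m_\infty\underline p(n)\bigr]\;\le\;\frac{1-S}{S}\,M(n).
\]

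What remains is a purely combinatorial bound on $(1-S)/S$ in terms of $s$ and $k$, and this is where I expect the real work to sit. The elementary sortedness argument gives $\lambda_{s+1}\le\lambda_s\le S/s$ and therefore $1-S\le(k-s)\lambda_s\le(k-s)S/s$, which only yields $(1-S)/S\le(k-s)/s$. To recover the sharper factor $(k-s)/(s+1)$ claimed in the lemma, I would telescope a single-step bound $\opt(j)-\opt(j+1)\le M(n)/(j+1)$ over $j=s,\dots,k-1$ using $\sum_{j=s+1}^{k}1/j\le (k-s)/(s+1)$; obtaining the single-step factor $1/(j+1)$ (rather than the $1/j$ that follows from the crude dropping argument above) is the delicate point. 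I would try to extract it by exploiting the optimality conditions of the inner convex reformulation, namely that at the adversary's worst-case $c^*$ all atoms $x^i$ in the support of the optimal convex combination share the common cost $\opt(j+1)$, so that the bound on the dropped atom can be improved from the crude uniform estimate $\opt(j+1)\le M_\infty\bar p(n)$ used in Step~4.
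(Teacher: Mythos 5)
Your construction of a feasible candidate and the first half of the argument are sound, but the bound you actually establish is $\opt(s)-\opt(k)\le M(n)\tfrac{1-S}{S}\le M(n)\tfrac{k-s}{s}$, and the route you sketch for closing the gap to $\tfrac{k-s}{s+1}$ does not go through. The loss is not in the combinatorial step but in the analytic one: by writing $\max_c c^\top y\le \tfrac{1}{S}\bigl(\max_c c^\top\bar x-\min_c c^\top z\bigr)$ you evaluate $\bar x$ and $z$ at different scenarios and, worse, inflate $\opt(k)$ by the factor $1/S$; the term $\tfrac{1-S}{S}\opt(k)$ alone can be as large as $\tfrac{k-s}{s}M_\infty\bar p(n)$ (take $\lambda_i\equiv 1/k$, so $S=s/k$), so no sharper estimate of $(1-S)/S$ can rescue the stated constant along this path. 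The telescoping repair fails for the same reason: the single-step version of your own bound is $\opt(j)-\opt(j+1)\le M(n)\lambda_{j+1}/(1-\lambda_{j+1})\le M(n)/j$, not $M(n)/(j+1)$, and $\sum_{j=s}^{k-1}1/j\le (k-s)/s$ again. The appeal to optimality conditions of the inner problem is left as a hope and is not carried out.

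The missing idea is to compare the candidate and $x^*(k)$ at a \emph{common} scenario. The paper does not renormalize: it reassigns the tail mass onto the $s$-th atom, setting $x(s):=\sum_{i\in [s-1]}\lambda_i x^i+\bigl(\sum_{i=s}^{k}\lambda_i\bigr)x^s\in X(s)$, so that $x(s)-x^*(k)=\sum_{i=s+1}^{k}\lambda_i\left(x^s-x^i\right)$. Taking $c^*(s)\in\argmax_{c\in U}c^\top x(s)$, which is a subgradient of $g(x)=\max_{c\in U}c^\top x$ at $x(s)$, gives $\opt(s)-\opt(k)\le c^*(s)^\top\left(x(s)-x^*(k)\right)\le M(n)\sum_{i=s+1}^{k}\lambda_i$, with no division by $S$; then $\lambda_i\le 1/i$ yields $\sum_{i=s+1}^{k}\lambda_i\le\sum_{i=s+1}^{k}1/i\le (k-s)/(s+1)$. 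Your renormalized candidate $y$ would in fact also work if you bounded $\max_c c^\top y-\opt(k)\le c^*(y)^\top(y-\bar x)$ directly: the positive and negative parts of $y-\bar x$ each carry coefficient mass $1-S=\sum_{i=s+1}^{k}\lambda_i$, giving $(1-S)M(n)\le M(n)(k-s)/(s+1)$ by the same argument. It is the $\max$--$\min$ splitting, not the choice of candidate, that loses the constant.
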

\begin{proof}
Let $x^*(k)$ be an optimal solution of Problem \eqref{eq:minmaxreformulation} with parameter $k$. Then by the results in \cite{buchheim2016min} we have
\[
\opt(k) = \max_{c\in U}c^\top x^*(k)
\]
and there exists a convex combination $x^*(k)=\sum_{i\in[k]}\lambda_i x^i$ where $x^i\in X$ and $\lambda\in\R_+^k$ with $\sum_{i\in[k]}\lambda_i=1$. We may assume without loss of generality that $\lambda_1\ge \ldots \ge \lambda_k$. Define a solution $x(s)$ by
\begin{equation}\label{eq:define_s_solution}
x(s):=\sum_{i\in [s-1]} \lambda_i x^i + \left(\sum_{i=s}^{k}\lambda_i\right) x^s,
\end{equation}
then $x(s)\in X(s)$ and therefore $\opt(s)\le \max_{c\in U}c^\top x(s)$. Furthermore let $c^*(s)\in \argmax_{c\in U}c^\top x(s)$. It follows
\begin{align*}
\opt(s)-\opt(k) & \le \max_{c\in U}c^\top x(s) - \max_{c\in U}c^\top x^*(k) \\
& \le c^*(s)^\top \left( x(s)-x^*(k)\right) \\
& = \sum_{i=s+1}^{k}\lambda_i c^*(s)^\top \left( x^s-x^i\right)\\
& \le M(n) \left( \sum_{i=s+1}^{k}\lambda_i \right)
\end{align*}
where the second inequality holds since $c^*(s)$ is a subgradient of the function $g(x)=\max_{c\in U}c^\top x$ in  $x(s)$. For the first equality we used the definition of $x(s)$ and $x^*(k)$ and for the last inequality we used the assumption $\underline{p}(n)\le \|x\|_0 \le \bar p(n)$ and $m_\infty \le c^*(s)_j\le M_\infty$ for all $j\in [n]$. Due to the sorting $\lambda_1\ge \ldots \ge \lambda_k$ and since the sum over all $\lambda_i$ is one, we have $\lambda_i\le \frac{1}{i}$ and hence
\begin{align*}
M(n) \left( \sum_{i=s+1}^{k}\lambda_i \right) &\le M(n) \left( \sum_{i=s+1}^{k}\frac{1}{i} \right)\\
& \le M(n) \frac{k-s}{s+1}
\end{align*}
which proves the result.
\end{proof}
Note that the bound derived in the latter lemma is small if $s$ is large compared to the difference $k-s$. Particularly it holds
$$\opt(k)-\opt(k+1)\le M(n) \frac{1}{k+1} \to 0 \ \text{ for } k\to \infty.$$
We can conclude that the objective value gain when $k$ is increased by one unit decreases to zero when $k$ gets large.
From Lemma \ref{lem:bound_opts_optk} we can easily derive an additive approximation guarantee for Algorithm \ref{alg:approx}.
\begin{corollary}\label{cor:approx_additive}
For given $k$ Algorithm \ref{alg:approx} returns a solution to Problem \eqref{eq:minmaxmin} with
\[
\app(k)\le \opt(k') + M(n) \frac{n-k}{k+1}.
\]
for all $k'\ge k$.
\end{corollary}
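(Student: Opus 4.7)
The plan is to essentially re-run the argument of Lemma~\ref{lem:bound_opts_optk}, this time starting from the optimal $n$-solution produced in Step~\ref{step:opt_n} of Algorithm~\ref{alg:approx} and taking the ``truncated'' convex combination to be supported on the top $k$ weighted atoms. The first observation I would make is that if $x^{i_1},\dots,x^{i_k}$ are the solutions returned by the algorithm, then for any convex combination $y=\sum_{j=1}^k \mu_j x^{i_j}$ one has $\min_{j\in[k]} c^\top x^{i_j}\le c^\top y$ pointwise in $c$, hence $\app(k)\le \max_{c\in U} c^\top y$. This is the bridge that lets the lemma's machinery (which lives on the Lagrangian side \eqref{eq:minmaxreformulation}) apply to the algorithm's output, which by construction is a min-max-min value.

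Next, I would pick the specific combination $\tilde{x}(k) := \sum_{j=1}^{k-1}\lambda_{i_j}^* x^{i_j} + \bigl(\sum_{j=k}^n \lambda_{i_j}^*\bigr)x^{i_k}$, which is exactly the analogue of construction \eqref{eq:define_s_solution} with the role of $s$ played by $k$ and the ambient combination size equal to $n$. Setting $x^*(n)=\sum_{j=1}^n \lambda_{i_j}^* x^{i_j}$ and choosing $c^*(k)\in \argmax_{c\in U} c^\top \tilde{x}(k)$, the identical four-line chain used in the proof of Lemma~\ref{lem:bound_opts_optk} (subgradient inequality, plug in $\tilde{x}(k)-x^*(n)=\sum_{j=k+1}^n \lambda_{i_j}^*(x^{i_k}-x^{i_j})$, bound each inner product by $M(n)$, and collect) yields $\max_{c\in U} c^\top \tilde{x}(k)\le \opt(n) + M(n)\sum_{j=k+1}^n \lambda_{i_j}^*$. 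Because the algorithm sorts the weights decreasingly and they sum to $1$, the estimate $\lambda_{i_j}^*\le 1/j$ gives $\sum_{j=k+1}^n \lambda_{i_j}^* \le (n-k)/(k+1)$, and combined with the first-paragraph observation this delivers $\app(k)\le \opt(n)+M(n)(n-k)/(k+1)$.

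Finally, I would invoke the monotonicity chain $\opt(1)\ge\opt(2)\ge\cdots\ge\opt(n)=\opt(n+1)=\cdots$ recalled in Section~\ref{sec:preliminaries}, which gives $\opt(n)\le \opt(k')$ for every $k'\ge k$ (splitting into $k\le k'\le n$ where the chain is monotone and $k'\ge n$ where it is constant). Chaining this with the inequality from the previous paragraph produces the stated bound $\app(k)\le \opt(k')+M(n)(n-k)/(k+1)$ for all $k'\ge k$.

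There is no real obstacle here beyond bookkeeping: the only subtlety is the translation in the first step from $\app(k)$ (a min-over-$k$-then-max quantity) to the value $\max_{c\in U} c^\top \tilde{x}(k)$ of a specific convex combination, after which Lemma~\ref{lem:bound_opts_optk}'s template applies verbatim with the relabeling $(s,k)\mapsto (k,n)$. The tightest version of the bound corresponds to $k'=n$, and all the other cases $k\le k'\le n-1$ and $k'>n$ follow for free from the monotonicity argument.
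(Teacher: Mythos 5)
Your proof is correct and follows essentially the same route as the paper: apply Lemma~\ref{lem:bound_opts_optk} with the relabeling $(s,k)\mapsto(k,n)$, note that the truncated combination \eqref{eq:define_s_solution} is supported on exactly the solutions Algorithm~\ref{alg:approx} returns so the bound transfers to $\app(k)$, and finish with the monotonicity $\opt(k')\ge\opt(n)$. Your explicit bridging step from the min-over-$k$-then-max quantity to the value of a convex combination is exactly what the paper's phrase ``the approximation guarantee holds also for this solution'' implicitly uses.
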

\begin{proof}
We can apply Lemma \ref{lem:bound_opts_optk} for $k=n$ and $s=k$. The solution $x(s)$ constructed in \eqref{eq:define_s_solution} is composed of the same solutions $x^1,\ldots ,x^s$ which Algorithm \ref{alg:approx} returns. Hence the approximation guarantee holds also for this solution and we obtain
\[
\opt(k) - \opt(n) \le \app(k)-\opt(n) \le M(n) \frac{n-k}{k+1}.
\]
It always holds $\app(k)-\opt(k')\le \app(k)-\opt(n)$ for all $k'\ge k$ which proves the result.
\end{proof}
Note that the latter approximation guarantee holds for a large number of problem classes and the parameter $M(n)$ is problem-specific since it involves the functions $\underline p,\bar p$. While the approximation guarantee is linear in $n$ and hence can be large, the interesting observation here is that it goes to zero if $k$ approaches $n$.

In Figure \ref{fig:plotAdditiveGuarantee} we show examples of the additive guarantee provided in the latter corollary for different constants $M(n)$. It can be seen that for all relevant functions $\underline p,\bar p$ with $1\le \underline p(n),\bar p(n) \le n$ the approximation guarantee goes to zero when $k$ approaches $n$. The smaller $\underline p, \bar p$ grow the faster the approximation guarantee approaches zero. This indicates that we can approximate \eqref{eq:minmaxmin} well by Algorithm \ref{alg:approx} even for intermediate values $k$.
\begin{figure}
\centering
\includegraphics[scale=0.6]{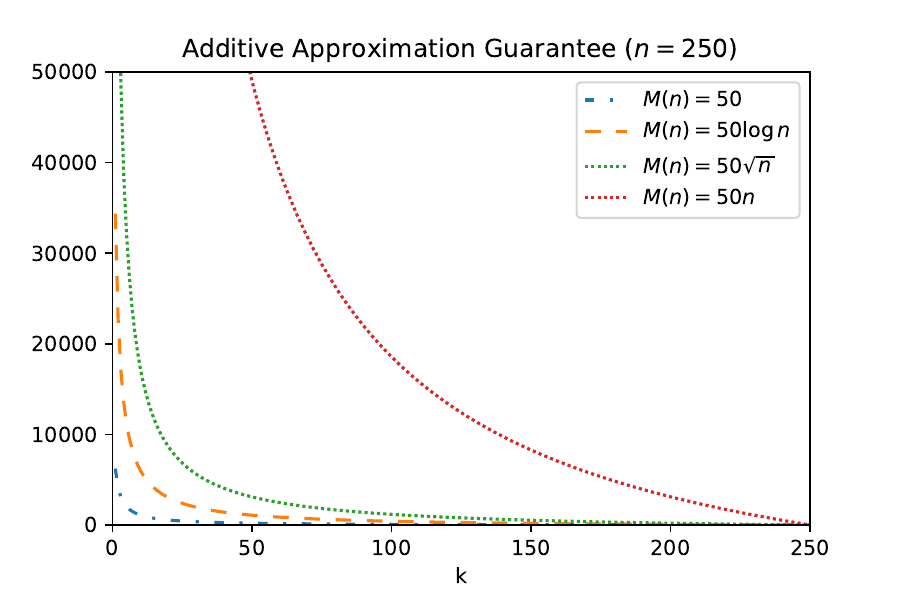}
\caption{Additive approximation guarantee depending on $k$ for fixed $n=250$.}
\label{fig:plotAdditiveGuarantee}
\end{figure}

\paragraph*{Ranges for $k$ with given approximation guarantee}
The latter results lead to the intuition, that the range for $k$ for which Algorithm \ref{alg:approx} provides an approximation guarantee of $a(n)$, gets larger with increasing $n$. This is shown in the following lemma.

\begin{lemma}\label{lem:range_l}
Let $k=n-l$ and $l\in [n-1]$, then Algorithm \ref{alg:approx} returns a solution with approximation guarantee at most $a(n)$ for all 
\[
l \in \left[0, \min\left\{n-1,\frac{a(n)n}{M(n)+a(n)}\right\} \right]\cap \mathbb N.
\]
\end{lemma}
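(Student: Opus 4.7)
The plan is to derive the range for $l$ directly from Corollary \ref{cor:approx_additive} by substituting $k = n-l$ into the additive error bound and solving the resulting inequality in $l$.

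First I would invoke Corollary \ref{cor:approx_additive}, which gives
\[
\app(n-l) \le \opt(n-l) + M(n)\,\frac{n-k}{k+1}
\]
and specialize it to $k = n-l$, so that $n-k = l$ and $k+1 = n-l+1$. Hence Algorithm \ref{alg:approx} has additive approximation error at most $M(n)\,\frac{l}{n-l+1}$, and it suffices to enforce
\[
M(n)\,\frac{l}{n-l+1} \le a(n).
\]

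To obtain the clean closed form stated in the lemma, I would weaken the fraction $\frac{l}{n-l+1}$ to $\frac{l}{n-l}$ (which is valid since $n-l \le n-l+1$, and $l<n$ is required anyway to have $k\ge 1$). It is then enough to ensure $M(n)\,l \le a(n)(n-l)$, which rearranges to
\[
l\bigl(M(n)+a(n)\bigr) \le a(n)\,n, \qquad\text{i.e.,}\qquad l \le \frac{a(n)\,n}{M(n)+a(n)}.
\]
Since by definition $k\ge 1$ we additionally need $l\le n-1$, so the admissible values of $l$ lie in $[0,\min\{n-1,\,a(n)n/(M(n)+a(n))\}]\cap\mathbb{N}$, matching the claim.

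This proof is really a short algebraic chase on top of Corollary \ref{cor:approx_additive}, so I do not expect a substantive obstacle. The only mild subtlety is the slight weakening from $\tfrac{l}{n-l+1}$ to $\tfrac{l}{n-l}$, which I would justify explicitly as yielding a cleaner, still sufficient condition, and the bookkeeping constraint $k\ge 1$ that produces the $n-1$ inside the $\min$.
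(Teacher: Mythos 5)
Your proof is correct and follows essentially the same route as the paper: both apply Corollary \ref{cor:approx_additive} with $k=n-l$, weaken the error bound from $M(n)\tfrac{l}{n-l+1}$ to $M(n)\tfrac{l}{n-l}$, and rearrange $M(n)\,l \le a(n)(n-l)$ into the stated threshold $l \le \tfrac{a(n)n}{M(n)+a(n)}$. The only cosmetic difference is that the paper verifies the bound by a chained inequality while you solve the inequality for $l$ explicitly; your added remark on the $l\le n-1$ bookkeeping is a small but welcome clarification.
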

\begin{proof}
We can estimate
\[
\app(k)-\opt(k) \le M(n)\frac{l}{n-l}\le M(n)\frac{a(n)n}{M(n)+a(n)}\frac{M(n)+a(n)}{nM(n)} = a(n)
\]
where the first inequality follows from Corollary \ref{cor:approx_additive} and the second follows from 
\[l\le \frac{a(n)n}{M(n)+a(n)}.\]
\end{proof}
Note that the upper bound term $\frac{a(n)n}{M(n)+a(n)}$ for $l$ in Lemma \ref{lem:range_l} (and therefore the absolute number of approximable $k<n$) goes to infinity if the approximation guarantee grows at least as fast as $M(n)$, i.e. either $\frac{a(n)}{M(n)}\to \infty$ or $\frac{a(n)}{M(n)}\to C\in \R$ for $n\to \infty$. Additionally if $a(n)$ is constant, the upper bound goes to infinity if $M(n)$ grows sub-linearly, i.e. $\frac{n}{M(n)}\to \infty$ for $n\to \infty$. Note that the latter is the case for all examples in Proposition \ref{prop:p-functions_examples}. This means that the range for $k$ for which $\opt(k)$ can be approximated grows with $n$. Additionally this means that the difficult instances, which do not achieve the approximation guarantee are the ones for $k<n-l$. In Figure \ref{fig:range_l} we show examples of the development of the ranges of $k$ for which the approximation guarantee is ensured. It can be seen that the larger the approximation guarantee the smaller is the smallest $k$ that can be approximated. Furthermore for increasing $n$ the range of approximable $k$ increases. 
\begin{figure}\label{fig:range_l}
\centering
\includegraphics[scale=0.45]{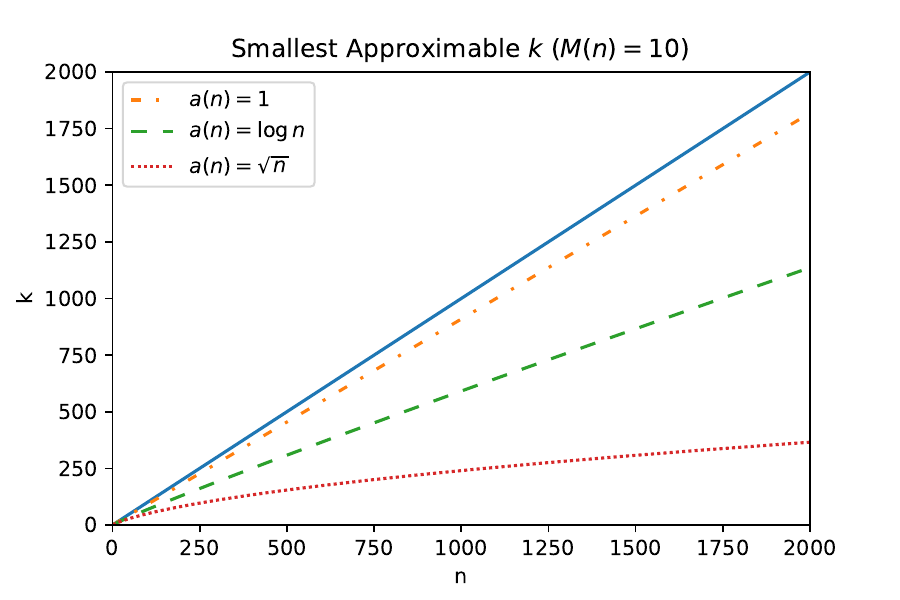} \quad
\includegraphics[scale=0.45]{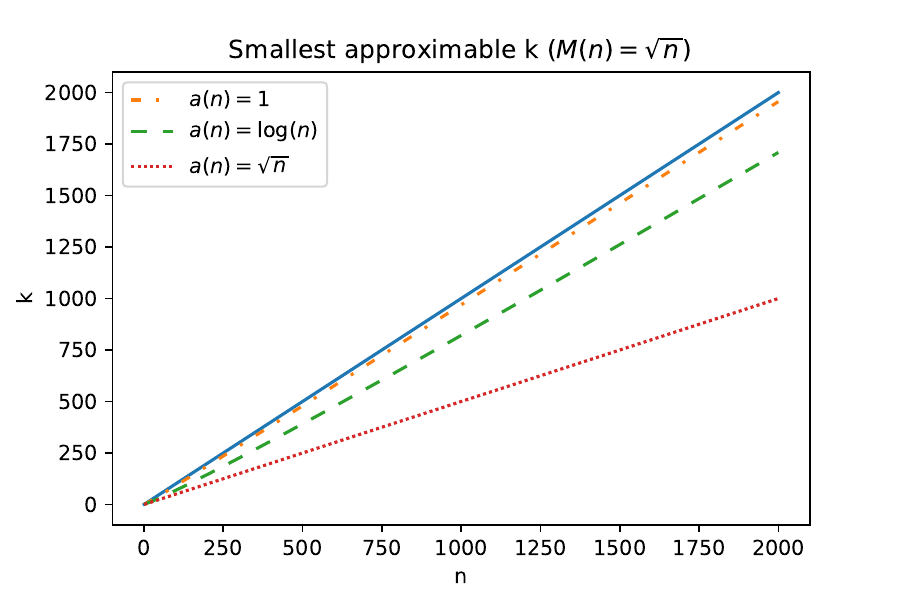}
\caption{The smallest approximable $k=n-l$ for given approximation factor $a(n)$ and $M(n)=10$ (left) or $M(n)=\sqrt{n}$ (right). The blue permanent line is the function $f(n)=n$, hence all $k$ values between this line and the $k=n-l$ line are approximable within the given approximation guarantee.} 
\end{figure}

In Table \ref{tbl:ranges_k} we show analytical lower bounds $k_0$ such that for all $k\ge k_0$ the given approximation guarantee is achieved. The values are derived as follows: first we estimate 
\[
M(n)=M_\infty\bar p(n) - m_\infty\underline p(n)\le M_\infty\bar p(n).
\]
Using this estimation we obtain that the result of Lemma \ref{lem:range_l} is valid for all
\[
k\ge n- \frac{a(n)n}{M_\infty\bar p(n)+a(n)}.
\]
Note that while the estimation of $M(n)$ results in easier notation, the size of this value significantly influences the quality of the bounds. Hence using the exact value for $M(n)$ can lead to better bounds. The values in Table \ref{tbl:ranges_k} are derived by plugging in $\bar p(n)$ and $a(n)$ and bounding the derived terms from above. It is important to note that for each value $k_0$ in the table, it is given as a fraction of $n$, i.e. $k_0=q(n)n$ where $q(n)\in (0,1)$. Depending on the combination of $a(n)$ and $\bar p(n)$ either $q(n)$ is constant, $q(n)\to 0$ or $q(n)\to 1$ for $n\to \infty$. Note that a constant $q(n)$ means that the absolute number of $k$ values for which the approximation holds grows with $n$ but remains always a certain fraction of $n$. The case $q(n)\to 0$ is the most desirable case, since here the fraction of approximable $k$ even increases. The less desirable case is the one where $q(n)\to 1$ since it means that the fraction of $n$ which can be approximated goes to zero with increasing $n$. However for most bounds it holds that the absolute number of approximable $k$, i.e. $|[n-k_0, n]\cap \mathbb N|$, goes to infinity for $n\to \infty$.    

Note that most of the problems considered in Proposition \ref{prop:p-functions_examples} have either $\bar p(n)=v$, $\bar p(n)=\sqrt{n}$ or $\bar p(n)=n$.

\begin{table}[h!]
\centering
\begin{tabular}{c||l|l|l}
\diagbox{$\bar p(n)$}{$a(n)$} & $\eps$ & $\log n$ & $n^{1-\gamma}$ \\
\hline
const. $v$ & $n\left( 1-\frac{\eps}{M_\infty v+\eps}\right)$ & $n\left(1-\frac{\log(n)}{M_\infty v + \log(n)}\right)$ & $n\left( 1 - \frac{n^{1-\gamma}}{M_\infty v+n^{1-\gamma}}\right)$\\
$\log{n}$ & $n\left( 1-\frac{\eps}{(M_\infty+1)\log{n}}\right)$ & $n\left( 1-\frac{1}{M_\infty +1}\right)$ & $n\left( 1 - \frac{n^{\frac{1}{2}-\gamma}}{M_\infty + n^{\frac{1}{2}-\gamma}}\right)$  \\
$n^{1-\delta}$ & $n\left( 1 - \frac{\eps}{n^{1-\delta}(M_\infty + 1)} \right)$ & $n\left( 1 - \frac{\log n}{n^{1-\delta}(M_\infty + 1)}\right)$ & $n\left( 1 - \frac{n^{\delta-\gamma}}{M_\infty + n^{\delta-\gamma}}\right)$
\end{tabular}
\caption{For each pair of function $\bar p(n)$ and approximation guarantee $a(n)$ the table shows a value $k_0$ such that for all $k\ge k_0$ Algorithm \ref{alg:approx} returns a solution of \eqref{eq:minmaxmin} with approximation guarantee $a(n)$. It holds $\gamma,\delta\in [0,1)$.}
\label{tbl:ranges_k}
\end{table}

\paragraph*{Complexity}
We will now analyze the complexity of \eqref{eq:minmaxmin} for $k=n-l$ where $l\in [n-1]$ is a fixed parameter. The following lemma shows, that for a fixed $l$ we can find a value $n_0$ such that for all $n\ge n_0$ Algorithm \ref{alg:approx} returns an optimal solution (up to an additive accuracy of $\eps>0$) if $\bar p(n)$ grows sub-linear.
\begin{lemma}\label{lem:lowerbound_n}
Assume $\bar p(n)\le Cn^{1-\delta}$ where $\delta\in (0,1]$ and $C>0$. Furthermore let $k=n-l$ and $\eps>0$. If 
\begin{equation}\label{eq:lowerbound_n}
n\ge l^{\frac{1}{\delta}}\left( \frac{CM_\infty}{\eps}+1\right)^{\frac{1}{\delta}}
\end{equation}
then $\opt(n)\le \opt(n-l)\le \opt(n)+\eps$.
\end{lemma}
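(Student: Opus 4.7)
The plan is to combine the monotonicity of $\opt(\cdot)$ with the additive gap bound of Lemma~\ref{lem:bound_opts_optk} and then match the resulting expression against the assumed lower bound on $n$ by straightforward algebra. The first inequality $\opt(n)\le \opt(n-l)$ is immediate from the chain $\opt(1)\ge \opt(2)\ge \ldots \ge \opt(n)$ recorded in Section~\ref{sec:preliminaries}, so the whole work lies in establishing the upper bound on $\opt(n-l)$.

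For the upper bound I would first invoke Lemma~\ref{lem:bound_opts_optk} with $s=n-l$ and $k=n$ (equivalently Corollary~\ref{cor:approx_additive} with $k=n-l$ and $k'=n$) to obtain
\[
\opt(n-l)-\opt(n)\;\le\;M(n)\,\frac{l}{n-l+1}.
\]
Next I would plug in the sub-linearity hypothesis to bound $M(n)\le M_\infty\,\bar p(n)\le M_\infty C\,n^{1-\delta}$, and then relax $\frac{l}{n-l+1}$ to the weaker $\frac{l}{n-l}$, so that the task reduces to verifying
\[
M_\infty C\,l\,n^{1-\delta}\;\le\;\eps\,(n-l).
\]

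This last inequality is pure algebra. Raising the hypothesis $n\ge l^{1/\delta}(CM_\infty/\eps+1)^{1/\delta}$ to the $\delta$-th power gives $n^\delta \ge l(CM_\infty/\eps+1)$, equivalently $\eps\,(n^\delta-l)\ge l\,C\,M_\infty$. Because $\delta\in(0,1]$ and $n\ge 1$ imply $n^{\delta-1}\le 1$, one has
\[
(n-l)\,n^{\delta-1}\;=\;n^\delta-l\,n^{\delta-1}\;\ge\;n^\delta-l,
\]
and therefore $\eps\,(n-l)\,n^{\delta-1}\ge l\,C\,M_\infty$. Multiplying through by $n^{1-\delta}$ yields exactly the required bound. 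I do not expect a serious obstacle here; the only sanity check is that the hypothesis also forces $n>l$, so that $k=n-l\ge 1$ and all denominators are positive, which is evident because $(CM_\infty/\eps+1)^{1/\delta}>1$.
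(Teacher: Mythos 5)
Your proof is correct and follows essentially the same route as the paper: both establish the first inequality by monotonicity and the second by applying Lemma~\ref{lem:bound_opts_optk} with $s=n-l$, $k=n$, bounding $M(n)\le CM_\infty n^{1-\delta}$, and then reducing the claim to the hypothesis via the observation $n^{\delta-1}\le 1$. The only cosmetic difference is that you discard the $+1$ in the denominator before doing the algebra while the paper carries it along; the underlying argument is identical.
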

\begin{proof}
The inequality $\opt(n)\le \opt(n-l)$ follows since \eqref{eq:minmaxmin} attains smaller optimal values if more solutions are allowed. To prove the inequality $\opt(n-l)\le \opt(n)+\eps$ we apply Lemma \ref{lem:bound_opts_optk} with $\bar p(n)=Cn^{1-\delta}$, $\underline{p}(n)\equiv 0$, $s=n-l$ and $k=n$ and we obtain
\begin{align*}
\opt(n-l)-\opt(n)&\le Cn^{1-\delta}M_\infty \frac{l}{n-l+1} \\
& = C M_\infty \frac{l}{n^{\delta}-\frac{l}{n^{1-\delta}}+\frac{1}{n^{1-\delta}}} \\
& \le C M_\infty \frac{l}{n^{\delta}-l} \\
& \le \eps
\end{align*}
where in the second inequality we used $n\ge 1$ and in the third inequality we used inequality~\eqref{eq:lowerbound_n}.
\end{proof}
The result of the latter lemma essentially says that, if we want to solve \eqref{eq:minmaxmin} with $k=n-l$, we do not have to worry about instances where $n$ is large, since we can solve these instances efficiently by using Algorithm \ref{alg:approx}. This is a pretty surprising result since we only have to care about the instances with bounded $n$. If we fix all parameters of the bound in \eqref{eq:lowerbound_n}, then we can also solve \eqref{eq:minmaxmin} for small $n$ by enumerating all solutions, which leads to the following theorem.
\begin{theorem}\label{thm:polynomial_additiveerror}
Assume $\bar p(n)\le Cn^{1-\delta}$ where $\delta\in (0,1]$ and $C>0$. Furthermore let $k=n-l$ and $\eps>0$. Then we can calculate an optimal solution of \eqref{eq:minmaxmin} (up to an additive error of $\eps$) in polynomial time in $n$, if all parameters $M_\infty$, $\delta$, $C$, $\eps$ and $l$ are fixed.
\end{theorem}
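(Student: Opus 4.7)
The plan is a dichotomy on the size of $n$. Define the threshold $n_0 := \lceil l^{1/\delta}(CM_\infty/\eps + 1)^{1/\delta}\rceil$, which is a constant, since $l$, $\delta$, $C$, $M_\infty$, and $\eps$ are all fixed. The algorithm inspects $n$ and branches into a ``large $n$'' regime that uses Algorithm \ref{alg:approx} and a ``small $n$'' regime that brute-forces over the solution space; the bound on $n_0$ comes precisely from Lemma \ref{lem:lowerbound_n}.

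For $n \ge n_0$ I would simply run Algorithm \ref{alg:approx}. Its runtime is polynomial in $n$: Step \ref{step:opt_n} is oracle-polynomial by Theorem \ref{thm:polytime_k_ge_n}, and Step \ref{step:opt_lambda} is a convex program of dimension $n$ solvable by the ellipsoid method using the linear maximization oracle for $U$. By Corollary \ref{cor:approx_additive} the returned $k$-set satisfies
\[
\app(n-l) - \opt(n) \ \le\ M(n)\frac{l}{n-l+1},
\]
and the same estimate used in the proof of Lemma \ref{lem:lowerbound_n}, together with the choice of $n_0$, bounds the right-hand side by $\eps$. Combined with $\opt(n)\le\opt(n-l)$, this yields an $\eps$-approximate solution to (M$^3(n-l)$).

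For $n < n_0$ I would brute force. Because $n$ is bounded by the constant $n_0$, both $|X|\le 2^{n_0}$ and the number of $k$-element tuples from $X$ are bounded by constants. First I would enumerate $X$ by feeding the linear minimization oracle, for each candidate $x\in\{0,1\}^n$, the cost vector with $c_i=-1$ if $x_i=1$ and $c_i=+1$ otherwise; then $x\in X$ exactly when the oracle's minimum value equals $-\|x\|_0$, since only $y=x$ achieves this among all binary vectors. Then, iterating over every $k$-tuple $(x^{i_1},\ldots,x^{i_k})$, I would evaluate the worst-case objective $\max_{c\in U}\min_{j}c^\top x^{i_j}$ and return the tuple achieving the smallest such value; this is exactly optimal with no $\eps$-slack needed.

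The main obstacle is the inner evaluation $\max_{c\in U}\min_{j}c^\top x^{i_j}$ in the brute-force regime, since although the number of tuples is constant, the evaluation is itself a nontrivial optimization over the convex set $U$. I would dispose of it via the same minimax reduction that underlies Theorem \ref{thm:polytime_k_ge_n}: by duality,
\[
\max_{c\in U}\min_{j\in[k]} c^\top x^{i_j} \ =\ \min_{\lambda\ge 0,\ \sum_j\lambda_j=1}\ \max_{c\in U}\ c^\top\Bigl(\sum_{j\in[k]}\lambda_j x^{i_j}\Bigr),
\]
which is a convex minimization over the simplex whose objective value and subgradients are computable via the linear maximization oracle for $U$; the ellipsoid method solves it to additive accuracy $\eps$ in time polynomial in the encodings of $M_\infty$ and $\eps$, which are fixed. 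Hence the runtime in the small-$n$ regime is constant in $n$, and the combined procedure runs in polynomial time in $n$.
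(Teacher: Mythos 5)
Your proposal is correct and follows essentially the same dichotomy as the paper: apply Lemma \ref{lem:lowerbound_n} with Corollary \ref{cor:approx_additive} when $n$ exceeds the constant threshold, and otherwise brute-force over the constantly many $k$-tuples, evaluating each via the convex reformulation underlying Theorem \ref{thm:polytime_k_ge_n}. The only additions are your explicit oracle-based enumeration of $X$ and the (harmless) self-correction that the small-$n$ case is also only $\eps$-accurate rather than exact, both consistent with the paper's argument.
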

\begin{proof}
Given an instance of \eqref{eq:minmaxmin} we check if condition \eqref{eq:lowerbound_n} is true or not, which can be done in polynomial time, since the right hand side is a constant. Note that to avoid calculations of the root-terms an easier bound without the exponents $\frac{1}{\delta}$ could be used.

\textbf{Case $1$:} If the condition is true, we solve $M^3(n)$ up to an additive error of $\eps$ in polynomial time which can be done by Algorithm \ref{alg:approx} due to Lemma \ref{lem:lowerbound_n} and Corollary \ref{cor:approx_additive}.

\textbf{Case $2$:} If condition \eqref{eq:lowerbound_n} is not true, then $n$ is bounded from above by the constant \[\tau:=l^{\frac{1}{\delta}}\left( \frac{2CM_\infty}{\eps}+1\right)^{\frac{1}{\delta}}.\]
Therefore the number of possible solutions in $X$ is in $\mathcal O(2^\tau)$ and hence the number of solutions of \eqref{eq:minmaxmin} is in $\mathcal O(2^{\tau k})=\mathcal O(2^{\tau^2})$ since $k\le n\le \tau$. We can calculate an optimal solution in this case by enumerating all possible solutions of \eqref{eq:minmaxmin} and comparing the objective values. Note that we can calculate the objective value of a given solution $\x{1},\ldots ,\x{k}$ by solving the problem
\[
\min_{x\in \conv{\x{1},\ldots ,\x{k}}} \max_{c\in U} c^\top x
\]
which can again be done in polynomial time in $n$.
\end{proof}

The following corollary follows directly from Theorem \ref{thm:polynomial_additiveerror} and Proposition \ref{prop:p-functions_examples}.
\begin{corollary}\label{cor:additiveerror_combprobs}
Under the assumptions of Theorem \ref{thm:polynomial_additiveerror} we can calculate an optimal solution of \eqref{eq:minmaxmin} (up to an additive error of $\eps$) in oracle-polynomial time, if the underlying problem \eqref{eq:min} is the spanning-tree problem on complete graphs, the matching problem on complete graphs, any cardinality constrained combinatorial problem, the TSP or the VRP. 
\end{corollary}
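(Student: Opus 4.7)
The plan is to read this as a mechanical instantiation of Theorem \ref{thm:polynomial_additiveerror}. That theorem requires a bound of the shape $\bar p(n) \le C n^{1-\delta}$ with fixed constants $C>0$ and $\delta \in (0,1]$, and Proposition \ref{prop:p-functions_examples} has already supplied exactly such sublinear bounds on $\bar p(n)$ for every problem class named in the corollary. So my strategy is simply to walk through the list and, for each class, identify concrete values of $C$ and $\delta$ for which the hypothesis of the theorem applies, and then invoke it.

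First I would handle the four graph problems on complete graphs: spanning tree, matching, TSP and VRP. For each of these, Proposition \ref{prop:p-functions_examples} gives a bound of the form $\bar p(n) \le \bar C \sqrt{n}$ with an explicit constant $\bar C$, which plugs into Theorem \ref{thm:polynomial_additiveerror} with $\delta = 1/2$. Next I would handle the cardinality constrained problems, where Proposition \ref{prop:p-functions_examples} states $\bar p(n) = \bar p$ is a constant independent of $n$. In this case I can take $\delta = 1$ and $C = \bar p$, since then $n^{1-\delta}=1$ and the bound $\bar p(n) \le C n^{1-\delta}$ reduces to the trivial equality $\bar p \le \bar p$, valid for all $n\ge 1$.

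With these instantiations the hypotheses of Theorem \ref{thm:polynomial_additiveerror} are satisfied in each case, and the theorem directly produces an algorithm that computes a solution of \eqref{eq:minmaxmin} up to additive error $\eps$ in time polynomial in $n$, with calls to the linear minimization oracle for $X$ counted as constant, i.e. in oracle-polynomial time as claimed. Since both Proposition \ref{prop:p-functions_examples} and Theorem \ref{thm:polynomial_additiveerror} have already been established, there is no real obstacle to overcome; the only sanity check worth performing is that the underlying deterministic problem \eqref{eq:min} does admit a linear minimization oracle for each of the listed classes, which is standard, so that the oracle-based framework behind Theorem \ref{thm:polytime_k_ge_n} and Algorithm \ref{alg:approx} is applicable throughout.
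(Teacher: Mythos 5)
Your proposal is correct and matches the paper exactly: the paper states that the corollary ``follows directly from Theorem \ref{thm:polynomial_additiveerror} and Proposition \ref{prop:p-functions_examples},'' and your instantiation of $C$ and $\delta$ (taking $\delta=\tfrac{1}{2}$ for the complete-graph problems and $\delta=1$ for the cardinality-constrained case) is precisely the intended, if unwritten, argument. Your added sanity check about the availability of a linear minimization oracle for each class is a reasonable touch but not something the paper spells out either.
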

Note that the necessary condition on $\bar p$ is likely to be true for many other problems. Besides the $p$-selection problem several combinatorial problems with cardinality constraint $|x|=p$ were studied in the literature; see \cite{bruglieri2006annotated} for an overview. Finally note that the latter corollary states that we obtain an oracle-polynomial algorithm (and not a polynomial algorithm) which is because some of the stated problems are NP-hard and hence no polynomial algorithm can be derived unless $P=NP$.

\subsection{Multiplicative approximation guarantees}\label{sec:multiplicative_errors}
In this subsection we follow similar ideas as in the latter section to derive multiplicative approximation guarantees. In contrast to the previous subsection we assume now that $k=\lceil qn \rceil$ for a fixed $q\in (0,1)$. However all results also go through for the case $k=n-l$, leading to slightly different bounds and guarantees. With slight abuse of notation in the following we write $k=qn$ and assume that $k$ is integer.

\paragraph*{Approximation Guarantees}
The following lemma is the multiplicative counterpart of Lemma~\ref{lem:bound_opts_optk}.
\begin{lemma}\label{lem:multiplicative_bound_opts_optk}
Let $s,k\in [n]$ where $s<k$, then it holds
\[
\opt(s)\le \left( 1 + \tilde M(n)\frac{k-s}{s+1}\right)\opt(k)
\]
where $\tilde M(n):=\frac{M_\infty}{m_\infty}\frac{\bar p(n)}{\underline{p}(n)}$. Furthermore if for all instances of dimension $n$ it holds $\underline p(n)=\bar p(n)$, then $\tilde M(n):=\frac{M_\infty}{m_\infty}$ is independent of $n$.
\end{lemma}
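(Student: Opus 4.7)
The plan is to mirror the proof of Lemma~\ref{lem:bound_opts_optk} and keep the same construction, but to convert the additive bound into a multiplicative one by establishing a suitable lower bound for $\opt(k)$ that exploits the assumption $m_\infty > 0$. As before, I would take an optimal point $x^*(k)=\sum_{i\in [k]}\lambda_i x^i$ of Problem~\eqref{eq:minmaxreformulation}, sort the $\lambda_i$ in non-increasing order, and define
\[
x(s):=\sum_{i\in[s-1]}\lambda_i x^i +\Big(\sum_{i=s}^{k}\lambda_i\Big) x^s \in X(s),
\]
so that using a subgradient $c^*(s)\in \argmax_{c\in U} c^\top x(s)$ of $g(x)=\max_{c\in U}c^\top x$ at $x(s)$ we get
\[
\opt(s)-\opt(k)\;\le\; c^*(s)^\top\big(x(s)-x^*(k)\big)\;=\;\sum_{i=s+1}^{k}\lambda_i\, c^*(s)^\top (x^s-x^i),
\]
exactly as in the earlier lemma.

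The key new step is to rephrase the pointwise bound multiplicatively. Since $c^*(s)\ge m_\infty\mathbf 1>0$ and $x^i\ge 0$, we can simply discard $-c^*(s)^\top x^i$ and use $\|x^s\|_0\le \bar p(n)$ together with $c^*(s)\le M_\infty \mathbf 1$ to obtain
\[
c^*(s)^\top (x^s-x^i)\;\le\; c^*(s)^\top x^s \;\le\; M_\infty \bar p(n).
\]
Next I would establish the lower bound $\opt(k)\ge m_\infty \underline p(n)$: for any $c\in U$ and any convex combination $x=\sum_i \lambda_i x^i$ with $x^i\in X$ we have $c^\top x \ge m_\infty \sum_j x_j = m_\infty \sum_i\lambda_i \|x^i\|_0\ge m_\infty \underline p(n)$, and this in particular holds at $x^*(k)$, so $\opt(k)=\max_{c\in U}c^\top x^*(k)\ge m_\infty \underline p(n)$.

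Combining the two previous bounds gives
\[
c^*(s)^\top (x^s-x^i)\;\le\; M_\infty \bar p(n)\;=\;\frac{M_\infty \bar p(n)}{m_\infty \underline p(n)}\, m_\infty \underline p(n)\;\le\; \tilde M(n)\,\opt(k),
\]
and plugging this back together with the harmonic-sum bound $\sum_{i=s+1}^{k}\lambda_i\le \sum_{i=s+1}^{k}\tfrac1i\le \tfrac{k-s}{s+1}$ (which follows from the sorting $\lambda_1\ge\ldots\ge\lambda_k$ and $\sum_i\lambda_i=1$, exactly as in Lemma~\ref{lem:bound_opts_optk}) yields
\[
\opt(s)-\opt(k)\;\le\; \tilde M(n)\,\opt(k)\cdot\frac{k-s}{s+1},
\]
which rearranges to the desired inequality. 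The additional remark that $\tilde M(n)$ collapses to $M_\infty/m_\infty$ when $\underline p(n)=\bar p(n)$ is then immediate from the definition.

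I do not expect serious obstacles: the only delicate point is noticing that the strict positivity assumption $m_\infty>0$ is essential in two places, first to justify dropping the $-c^*(s)^\top x^i$ term without changing sign, and second to guarantee the lower bound $\opt(k)\ge m_\infty\underline p(n)$ that turns the additive bound into a multiplicative one. This is precisely why (as the authors noted) the multiplicative guarantee, unlike the additive one, genuinely requires the $m_\infty$ assumption to be well-defined.
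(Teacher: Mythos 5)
Your proposal is correct and follows essentially the same route as the paper: both arguments reduce to the additive bound $\opt(s)-\opt(k)\le M_\infty\bar p(n)\frac{k-s}{s+1}$ (you obtain it by dropping the nonnegative term $c^*(s)^\top x^i$, the paper by invoking Lemma~\ref{lem:bound_opts_optk} and relaxing $M(n)\le M_\infty\bar p(n)$), and both then convert it to a multiplicative guarantee via the identical lower bound $\opt(k)\ge m_\infty\underline p(n)$ derived from the convex-combination structure of $x^*(k)$. Your closing remark about where $m_\infty>0$ is genuinely needed matches the paper's observation that $\opt(k)>0$ is required for the multiplicative guarantee to be well defined.
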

\begin{proof}
First note that due to the assumptions $\underline{p}\le \|x\|_0$ for all $x\in X$ and $c\ge m_\infty\1>0$ for all $c\in U$ we have $\opt(k)>0$ for all $k\in[n]$ and therefore the multiplicative approximation guarantee stated in the lemma is well defined. 

By Lemma \ref{lem:bound_opts_optk} we have
\begin{equation}\label{eq:LemmaPTASInequ}
\opt(s)\le \opt(k) + M(n) \frac{k-s}{s+1}\le \opt(k) + M_\infty \bar p(n) \frac{k-s}{s+1}
\end{equation}
Let $x^*(k)$ be an optimal solution of Problem \eqref{eq:minmaxreformulation} with parameter $k$. Then by the results in \cite{buchheim2016min} we have
\[
\opt(k) = \max_{c\in U}c^\top x^*(k)
\]
and there exists a convex combination $x^*(k)=\sum_{i\in[k]}\lambda_i x^i$ where $x^i\in X$ and $\lambda\in\R_+^k$ with $\sum_{i\in[k]}\lambda_i=1$. Then it holds
\begin{equation}\label{eq:LemmaPTASInequ2}
\opt(k)= \max_{c\in U}c^\top x^*(k) \ge m_{\infty}\1^\top x^*(k)
\end{equation}
where the last inequality holds due to the assumption $c\ge m_{\infty}\1$ and since $x^*(k)\ge 0$. We can now reformulate
\begin{equation}\label{eq:boundbelowbybarp}
m_{\infty}\1^\top x^*(k) = \sum_{i\in[k]}\lambda_i m_{\infty}\1^\top x^i \ge \sum_{i\in[k]}\lambda_i m_{\infty}\underline{p}(n) = m_{\infty}\underline{p}(n)
\end{equation}
where in the first inequality we used the assumption $\underline{p}(n)\le \|x\|_0$. Together with \eqref{eq:LemmaPTASInequ2} we obtain $\opt(k)\ge m_{\infty}\underline{p}(n)$. It follows
\[
\frac{\opt(s)}{\opt(k)} \le \frac{\opt(k) + M_\infty \bar p(n) \frac{k-s}{s+1}}{\opt(k)} \le 1 + \frac{M_\infty \bar p(n) \frac{k-s}{s+1}}{m_\infty \underline p(n)}
\]
which proves the result.

The second result follows directly from $\underline p(n)=\bar p(n)$.
\end{proof}
Analogously to Corollary \ref{cor:approx_additive} we can now derive a multiplicative approximation guarantee for each $k\in [n]$ from Lemma \ref{lem:multiplicative_bound_opts_optk}. 

\begin{corollary}\label{cor:approx_multiplicative}
For given $k\in [n]$ Algorithm \ref{alg:approx} returns a solution to Problem \eqref{eq:minmaxmin} with multiplicative approximation guarantee
\[
\app(k)\le \left( 1 + \tilde M(n)\frac{n-k}{k+1}\right)\opt(k')
\]
for all $k'\ge k$.
\end{corollary}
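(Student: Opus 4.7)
The plan is to mimic the derivation of Corollary \ref{cor:approx_additive}, but now starting from the multiplicative bound of Lemma \ref{lem:multiplicative_bound_opts_optk} instead of from the additive bound of Lemma \ref{lem:bound_opts_optk}. Specifically, I would apply Lemma \ref{lem:multiplicative_bound_opts_optk} with $s=k$ and with the lemma's parameter ``$k$'' set to $n$, yielding
\[
\opt(k) \;\le\; \left(1+\tilde M(n)\,\frac{n-k}{k+1}\right)\opt(n).
\]
The point is that the inequality above was actually established by constructing an explicit convex combination $x(k)$ as in \eqref{eq:define_s_solution} and bounding $\max_{c\in U} c^\top x(k)$, so the same estimate bounds any objective value obtained from a subset of solutions that contains the support of $x(k)$.

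The key observation linking this to Algorithm \ref{alg:approx} is that, after sorting the multipliers $\lambda^*$ in decreasing order, the vector $x(k)=\sum_{i\in[k-1]}\lambda^*_i x^{i_j}+\bigl(\sum_{i=k}^n \lambda^*_i\bigr)x^{i_k}$ used in the proofs of Lemmas \ref{lem:bound_opts_optk} and \ref{lem:multiplicative_bound_opts_optk} is a convex combination of exactly the $k$ solutions $x^{i_1},\ldots,x^{i_k}$ returned by Algorithm \ref{alg:approx}. By the reformulation \eqref{eq:minmaxreformulation}, the min-max-min value $\app(k)$ equals the minimum of $\max_{c\in U} c^\top x$ over the convex hull of these $k$ solutions, so
\[
\app(k)\;\le\;\max_{c\in U} c^\top x(k).
\]
Repeating the chain of inequalities in the proof of Lemma \ref{lem:multiplicative_bound_opts_optk} with $\max_{c\in U} c^\top x(k)$ in place of $\opt(k)$ on the left-hand side therefore gives
\[
\app(k)\;\le\;\left(1+\tilde M(n)\,\frac{n-k}{k+1}\right)\opt(n).
\]

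To finish, I would invoke the monotonicity chain $\opt(1)\ge\opt(2)\ge\cdots\ge\opt(n)=\opt(n+1)=\cdots$ recalled in Section \ref{sec:preliminaries}, which yields $\opt(n)\le\opt(k')$ for every $k'\ge k$ (the case $k\le k'\le n$ is direct monotonicity, while the case $k'\ge n$ uses the stationarity $\opt(k')=\opt(n)$). Substituting this into the previous display gives the claimed bound $\app(k)\le\left(1+\tilde M(n)\frac{n-k}{k+1}\right)\opt(k')$.

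There is no real obstacle here: the entire work is done by Lemma \ref{lem:multiplicative_bound_opts_optk} and by the fact (already exploited in Corollary \ref{cor:approx_additive}) that the canonical convex combination witnessing the lemma is supported on precisely the solutions that Algorithm \ref{alg:approx} outputs. The only minor care point is to make sure that the bound $\opt(k)\ge m_\infty\underline{p}(n)$ that powers the multiplicative step applies to the left-hand side $\max_{c\in U} c^\top x(k)$; but this is immediate since $x(k)\in\conv(X)$ has nonnegative coordinates whose sum over any $x^i$ is at least $\underline{p}(n)$, exactly as in \eqref{eq:boundbelowbybarp}.
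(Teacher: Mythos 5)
Your proposal is correct and follows essentially the same route as the paper, which simply derives this corollary ``analogously to Corollary \ref{cor:approx_additive}'' by applying Lemma \ref{lem:multiplicative_bound_opts_optk} with $s=k$ and the lemma's $k$ set to $n$, observing that the witness $x(k)$ is supported on exactly the solutions Algorithm \ref{alg:approx} returns, and finishing with the monotonicity chain $\opt(n)\le\opt(k')$. Your final ``care point'' is slightly misdirected --- the lower bound $m_\infty\underline{p}(n)$ is needed for the denominator $\opt(n)$ (already established in the lemma's proof via \eqref{eq:boundbelowbybarp}), not for the left-hand side $\max_{c\in U}c^\top x(k)$ --- but this does not affect the validity of the argument.
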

Note that the behavior of the multiplicative approximation guarantee $a(n)=\tilde M(n)\frac{n-k}{k+1}$ is  up to the factor $\tilde M(n)$ the same as for the additive approximation guarantee; see Figure \ref{fig:plotAdditiveGuarantee} for exemplary behaviors.

If we choose $k$ to be a fraction of $n$, i.e. $k=\lceil qn\rceil $ for a fixed $q\in (0,1)$, and if additionally we have $\underline p(n)=\bar p(n)$ (which is the case for the spanning tree problem and the matching problem on complete graphs, the traveling salesmen problem and each cardinality constrained problem), then the approximation factor is given as
\[
1+ \frac{M_\infty}{m_\infty} \frac{n-\lceil qn\rceil}{\lceil qn\rceil +1}\le 1 + \frac{M_\infty}{m_\infty} \frac{(1-q)n}{qn} = 1 + \frac{M_\infty}{m_\infty} \frac{1-q}{q}
\]
which is independent of the dimension $n$. This proves the following corollary.
\begin{corollary}\label{cor:constant_approx}
Algorithm \ref{alg:approx} has a constant multiplicative approximation guarantee for \eqref{eq:minmaxmin} if $k=\lceil qn\rceil$ for a fixed $q\in (0,1)$ and $\underline p(n) = \bar p(n)$.
\end{corollary}

Note that for budgeted uncertainty sets where each parameter is allowed to deviate from its mean by at most $50\%$ we have $\frac{M_\infty}{m_\infty}=1.5$. If we choose $q=\frac{1}{2}$ the constant approximation factor of Algorithm \ref{alg:approx} is 
\[
1 + \frac{M_\infty}{m_\infty} \frac{1-q}{q} = 2.5.
\]

\paragraph*{Ranges for $k$ with given approximation guarantee}
The latter results lead to the intuition, that the range for $k$ for which Algorithm \ref{alg:approx} provides a multiplicative approximation guarantee of $a(n)$, gets larger with increasing $n$. This is shown in the following lemma.

\begin{lemma}\label{lem:range_q}
Let $k=qn$ and $q\in (0,1]$, then Algorithm \ref{alg:approx} returns a solution with multiplicative approximation guarantee at most $a(n)$ for all 
\[
q \in \left[\frac{\tilde M(n)}{\tilde M(n)+a(n)}, 1\right].
\]
\end{lemma}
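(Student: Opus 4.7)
The plan is to mirror the strategy used in Lemma \ref{lem:range_l} for the additive case, but apply the multiplicative bound from Corollary \ref{cor:approx_multiplicative} instead. Specifically, I would start by substituting the given parametrization $k=qn$ into the multiplicative error term $\tilde M(n)\tfrac{n-k}{k+1}$ produced by Corollary \ref{cor:approx_multiplicative} (taking $k'=k$).

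Next I would estimate this expression from above by a quantity that depends only on $q$, not on $n$. The key inequality is
\[
\frac{n-qn}{qn+1}\;\le\;\frac{n(1-q)}{qn}\;=\;\frac{1-q}{q},
\]
which holds because $qn+1\ge qn$. Hence Corollary \ref{cor:approx_multiplicative} yields
\[
\app(k)\;\le\;\left(1+\tilde M(n)\,\frac{1-q}{q}\right)\opt(k).
\]
Requiring the multiplicative error to be at most $a(n)$ amounts to $\tilde M(n)\,(1-q)/q\le a(n)$. Rearranging gives $\tilde M(n)\le q\bigl(\tilde M(n)+a(n)\bigr)$, i.e.
\[
q\;\ge\;\frac{\tilde M(n)}{\tilde M(n)+a(n)},
\]
which is precisely the stated range (together with the trivial upper endpoint $q\le 1$, where the algorithm is known to be exact by Theorem \ref{thm:polytime_k_ge_n}).

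This is essentially algebraic, so there is no serious obstacle. The only point that requires a small amount of care is the estimate $\tfrac{n-qn}{qn+1}\le \tfrac{1-q}{q}$, which conveniently eliminates the additive $+1$ term in the denominator; keeping the $+1$ would yield a cleaner-looking bound in terms of $n$ but a messier threshold for $q$. Using the slightly weaker estimate gives the clean closed form displayed in the lemma and matches the style of Lemma \ref{lem:range_l}. Note that the assumption $\opt(k)>0$, needed to interpret a multiplicative guarantee, is already justified in the proof of Lemma \ref{lem:multiplicative_bound_opts_optk} via the positivity assumptions $c\ge m_\infty\mathbf{1}>0$ and $\underline p(n)\ge 1$.
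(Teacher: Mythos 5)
Your proposal is correct and follows essentially the same route as the paper: both apply Corollary \ref{cor:approx_multiplicative} with $k=qn$, drop the $+1$ in the denominator to get the bound $\tilde M(n)\tfrac{1-q}{q}$, and then verify that this is at most $a(n)$ precisely when $q\ge \tfrac{\tilde M(n)}{\tilde M(n)+a(n)}$. The remark on well-definedness of the multiplicative guarantee is a sensible addition but not needed beyond what Lemma \ref{lem:multiplicative_bound_opts_optk} already establishes.
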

\begin{proof}
We can estimate
\begin{align*}
&\frac{\app(k)}{\opt(k)} \le 1 +  \tilde M(n)\frac{n-qn}{qn}
\le 1 +  \tilde M(n)\frac{1-q}{q} \\ &\le 1 + \tilde M(n)\frac{a(n)}{\tilde M(n) + a(n)}\frac{\tilde M(n) + a(n)}{\tilde M(n)} = 1+a(n)
\end{align*}
where the first inequality follows from Corollary \ref{cor:approx_multiplicative} and the third follows from 
\begin{equation}\label{eq:lowerBound_q} q\ge \frac{\tilde M(n)}{\tilde M(n)+a(n)}.
\end{equation}
\end{proof}
Note that the lower bound term \eqref{eq:lowerBound_q} for $q$ goes to zero if the approximation guarantee grows faster than $\tilde M(n)$ while it goes to one if it grows slower. If $\tilde M(n)$ grows as fast as $a(n)$, then it is constant. In Figure \ref{fig:range_q} we show examples for the development of the ranges for $k$ for which the approximation guarantee is ensured.

\begin{figure}
\centering
\includegraphics[scale=0.45]{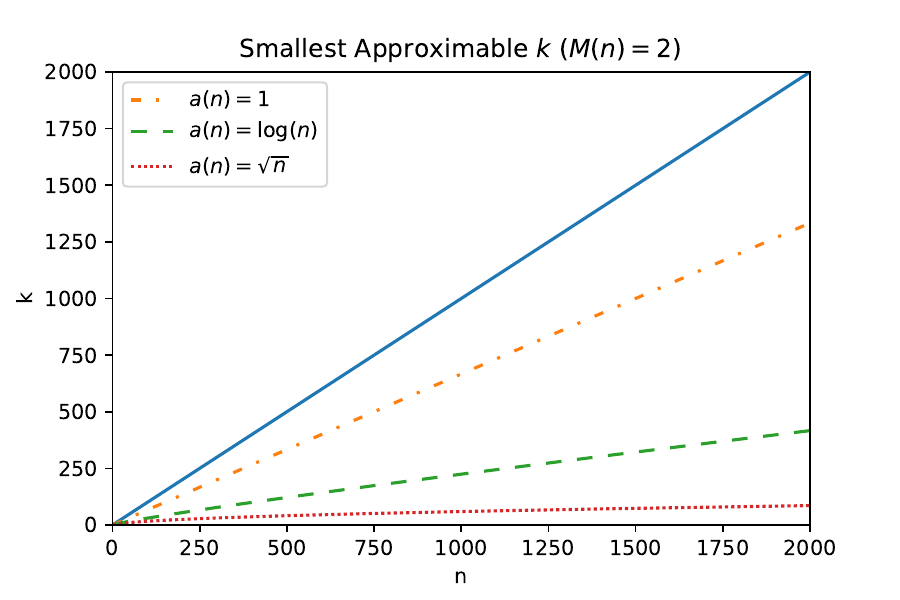} \quad
\includegraphics[scale=0.45]{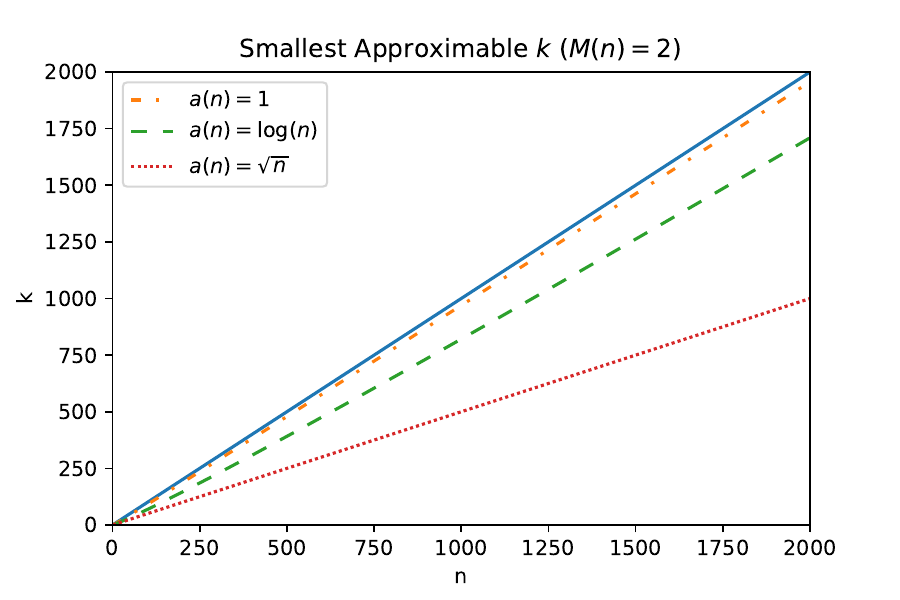}
\caption{The smallest approximable $k=qn$ for given approximation factor $a(n)$ and $\tilde M(n)=2$ (left) or $\tilde M(n)=\sqrt{n}$ (right). The blue permanent line is the function $f(n)=n$, hence all $k$ values between this line and the $k=qn$ line are approximable within the given approximation guarantee.}
\label{fig:range_q}
\end{figure} 

In Table \ref{tbl:ranges_k_multiplicative} we show lower bounds $k_0$ such that for all $k\ge k_0$ the given approximation guarantee is achieved. The values are derived analogously to Table \ref{tbl:ranges_k}. It is important to note that each value $k_0$ in the table is given as a fraction of $n$, i.e. $k_0=q(n)n$ where $q(n)\in (0,1)$. Depending on the combination of $a(n)$ and $\frac{\bar p(n)}{\underline p(n)}$ either $q(n)$ is constant, $q(n)\to 0$ or $q(n)\to 1$ for $n\to \infty$. Note that a constant $q(n)$ means that the number of $k$ values for which the approximation holds grows with $n$ but remains always a certain fraction of $n$. The case $q(n)\to 0$ is the most desirable case, since here the number of $k$ values for which the approximation holds grows with $n$ and the fraction of values for which it holds even increases. The less desirable case is the one where $q(n)\to 1$ since it means that the fraction of $n$ which can be approximated goes to zero with increasing $n$. However for all bounds the absolute number of approximable $k$ values, i.e. $|[n-k_0,n]\cap \mathbb N|$ goes to infinity for $n\to \infty$.    
Note that in contrast to the results in Section \ref{sec:additive_errors} here the function ratio $\frac{\bar p(n)}{\underline p(n)}$ has to be bounded. As shown in Proposition \ref{prop:p-functions_examples} this fraction is constant for the spanning tree problem and the matching problem on complete graphs, as well as for all other mentioned problems.

\begin{table}[h!]
\centering
\begin{tabular}{c||l|l|l}
\diagbox{$\frac{\bar p(n)}{\underline p(n)}$}{$a(n)$} & $\eps$ & $\log n$ & $n^{1-\gamma}$ \\
\hline
const. $v$ & $n\left( \frac{\hat m v}{\hat m v+\eps}\right)$ & $n\left(\frac{\hat m v}{\hat m v+\log n}\right)$ & $n\left(  \frac{\hat m v}{\hat m v+n^{1-\gamma}}\right)$\\
$\log{n}$ & $n\left( \frac{\hat m\log n}{\hat m \log n+\eps}\right)$ & $n\left(\frac{\hat m}{\hat m +1}\right)$ & $n\left( \frac{\hat m\log n}{\hat m\log n + n^{1-\gamma}}\right)$  \\
$n^{1-\delta}$ & $n\left( \frac{\hat m n^{1-\delta}}{\hat m n^{1-\delta}+\eps}\right)$ & $n\left( \frac{\hat m n^{1-\delta}}{\hat m n^{1-\delta}+\log n}\right)$ & $n\left( \frac{\hat m}{\hat m + n^{\delta-\gamma}}\right)$
\end{tabular}
\caption{For each pair of function ratio $\frac{\bar p(n)}{\underline p(n)}$ and approximation guarantee $a(n)$ the table shows a value $k_0$ such that for all $k\ge k_0$ Algorithm \ref{alg:approx} returns a solution of \eqref{eq:minmaxmin} with approximation guarantee $a(n)$. It holds $\hat m:=\frac{M_\infty}{m_\infty}$ and $\gamma,\delta\in [0,1)$.}
\label{tbl:ranges_k_multiplicative}
\end{table}

\paragraph*{Complexity}
Unfortunately following the previous methods it is not possible to prove a multiplicative version of Theorem \ref{thm:polynomial_additiveerror} which provides a polynomial time algorithm to approximate \eqref{eq:minmaxmin} for $k=qn$ for each fixed $q\in (0,1)$. However a weaker version of the result will be proved in the following.
\begin{lemma}\label{lem:multiplicative_lowerbound_n}
Assume that $\underline p (n) = \bar p (n)$ and let $\eps>0$, $k=qn$. If
\[q\in \left[ \frac{\frac{M_\infty}{m_\infty}}{\eps + \frac{M_\infty}{m_\infty}}, 1\right] \] 
then $\opt(qn)\le (1+\eps)\opt(n)$.
\end{lemma}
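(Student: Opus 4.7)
The plan is to apply Lemma \ref{lem:multiplicative_bound_opts_optk} directly, with $s=qn$ and $k=n$ (renaming to avoid clashing with the $k$ of the current lemma). Under the extra assumption $\underline p(n)=\bar p(n)$, the second part of Lemma \ref{lem:multiplicative_bound_opts_optk} tells us that $\tilde M(n)=M_\infty/m_\infty$ is independent of $n$, so we get the clean bound
\[
\opt(qn)\;\le\;\left(1+\frac{M_\infty}{m_\infty}\cdot\frac{n-qn}{qn+1}\right)\opt(n).
\]

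Next I would simplify the fraction. Since $qn+1>qn$, we have $\frac{n-qn}{qn+1}\le \frac{n-qn}{qn}=\frac{1-q}{q}$, which lets us remove the $n$-dependence entirely. Hence it is enough to guarantee
\[
\frac{M_\infty}{m_\infty}\cdot\frac{1-q}{q}\;\le\;\eps.
\]
Solving this inequality for $q$ is straightforward: it is equivalent to $\frac{1}{q}\le 1+\frac{\eps\, m_\infty}{M_\infty}$, i.e.
\[
q\;\ge\;\frac{M_\infty/m_\infty}{\eps+M_\infty/m_\infty},
\]
which is precisely the assumed lower bound on $q$. Plugging back yields $\opt(qn)\le (1+\eps)\opt(n)$, as required.

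There is essentially no obstacle here: the result is a direct specialization of Lemma \ref{lem:multiplicative_bound_opts_optk} to the case $s=qn$, $k=n$, using the simplification $\tilde M(n)=M_\infty/m_\infty$ under the hypothesis $\underline p(n)=\bar p(n)$, together with the elementary estimate $\frac{n-qn}{qn+1}\le\frac{1-q}{q}$ and a single algebraic rearrangement. Hence the proof is essentially a two-line computation, and the only modeling choice is whether to state the argument via $\frac{1-q}{q}$ (clean, and what I would use) or keep the sharper $\frac{n-qn}{qn+1}$, which would yield a marginally larger admissible range for $q$ at the cost of a less attractive closed form.
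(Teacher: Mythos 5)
Your argument is correct and follows essentially the same route as the paper: apply Lemma \ref{lem:multiplicative_bound_opts_optk} with $k=n$, $s=qn$, use $\tilde M(n)=M_\infty/m_\infty$ from the hypothesis $\underline p(n)=\bar p(n)$, reduce the fraction to $\frac{1-q}{q}$, and verify that the assumed lower bound on $q$ forces $\frac{M_\infty}{m_\infty}\cdot\frac{1-q}{q}\le\eps$. The only cosmetic difference is that you solve the inequality for $q$ explicitly, whereas the paper plugs the bound on $q$ in directly; the content is identical.
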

\begin{proof}
We can apply Lemma \ref{lem:multiplicative_bound_opts_optk} with $k=n$ and $s=qn$ which leads to the bound 
\[
\opt(qn)\le \left( 1 + \tilde M(n)\frac{1-q}{q}\right)\opt(n).
\]
where $\tilde M(n)=\frac{M_\infty}{m_\infty}$ since $\underline p (n) = \bar p (n)$. We can estimate 
\[
\tilde M(n)\frac{1-q}{q} \le \frac{M_\infty}{m_\infty} \frac{\eps}{\eps + \frac{M_\infty}{m_\infty}}\frac{\eps + \frac{M_\infty}{m_\infty}}{\frac{M_\infty}{m_\infty}} = \eps
\]
\end{proof}
Note that the latter result only holds if $\underline p(n) = \bar p(n)$. However this is the case for all problems in Proposition \ref{prop:p-functions_examples} on complete graphs. Furthermore we do not need any assumption on the behavior of $\bar p(n)$ here. The following theorem follow directly from Lemma \ref{lem:multiplicative_lowerbound_n}.
  
\begin{theorem}\label{thm:polynomial_ptas}
Assume that for all $n\in\N$ it holds $\underline{p}(n)=\bar p(n)$, let $\eps>0$ and $k=qn$ with 
\[q\in \left[ \frac{\frac{M_\infty}{m_\infty}}{\eps + \frac{M_\infty}{m_\infty}}, 1\right].\] 
Then \eqref{eq:minmaxmin} can be approximated with an multiplicative error of $1+\eps$ in oracle-polynomial time in $n$ if all parameters $\frac{M_\infty}{m_\infty}$ and $\eps$ are fixed.
\end{theorem}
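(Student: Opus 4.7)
The plan is to combine the multiplicative approximation guarantee of Algorithm \ref{alg:approx} (Corollary \ref{cor:approx_multiplicative}) with the range of $q$ handled in Lemma \ref{lem:multiplicative_lowerbound_n}. First I would invoke the hypothesis $\underline{p}(n)=\bar p(n)$, which by the second part of Lemma \ref{lem:multiplicative_bound_opts_optk} makes $\tilde M(n)=\frac{M_\infty}{m_\infty}$ a constant independent of $n$. Since $\frac{M_\infty}{m_\infty}$ is fixed by assumption, this constant is absorbed into the runtime analysis without affecting polynomiality.

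Next I would apply Corollary \ref{cor:approx_multiplicative} with $k'=k=qn$ to obtain
\[
\app(k)\le \left(1+\tilde M(n)\frac{n-k}{k+1}\right)\opt(k) \le \left(1+\tilde M(n)\frac{1-q}{q}\right)\opt(k).
\]
A calculation identical to the one in the proof of Lemma \ref{lem:multiplicative_lowerbound_n} then shows that whenever $q\ge \frac{M_\infty/m_\infty}{\eps+M_\infty/m_\infty}$, the quantity $\tilde M(n)\frac{1-q}{q}$ is bounded by $\eps$. Combining these two inequalities yields $\app(k)\le (1+\eps)\opt(k)$, the claimed multiplicative guarantee.

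It remains to argue that Algorithm \ref{alg:approx} itself runs in time polynomial in $n$. Step \eqref{step:opt_n} solves (M$^3(n)$), which by Theorem \ref{thm:polytime_k_ge_n} is doable in polynomial time via the oracle for \eqref{eq:min} and the linear maximization oracle for $U$. Step \eqref{step:opt_lambda} is a convex minimization over the simplex with a dualizable inner maximization, solvable in polynomial time for the standard uncertainty sets. The final sorting step is trivially polynomial. Since all problem-independent parameters ($\frac{M_\infty}{m_\infty}$, $\eps$) are fixed, the total runtime is polynomial in $n$.

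The only delicate point I foresee is reconciling the additive-error guarantee of Theorem \ref{thm:polytime_k_ge_n} in Step \eqref{step:opt_n} with the multiplicative bound demanded by the theorem. This is resolved by observing that $\opt(k)\ge m_\infty\underline p(n)>0$ (cf.\ inequality \eqref{eq:boundbelowbybarp} in the proof of Lemma \ref{lem:multiplicative_bound_opts_optk}), so one may choose the internal additive tolerance in Step \eqref{step:opt_n} to be a sufficiently small multiple of $m_\infty\underline p(n)$ to ensure the combined slack still sits inside the $(1+\eps)$ window; the corresponding polynomial encoding length is handled by Theorem \ref{thm:polytime_k_ge_n}.
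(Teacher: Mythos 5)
Your proposal is correct and follows essentially the same route as the paper, which derives the theorem directly from Lemma \ref{lem:multiplicative_lowerbound_n} together with Corollary \ref{cor:approx_multiplicative} and the polynomial-time solvability of Step \eqref{step:opt_n} via Theorem \ref{thm:polytime_k_ge_n}. Your final remark on converting the additive tolerance of Theorem \ref{thm:polytime_k_ge_n} into multiplicative slack using $\opt(k)\ge m_\infty\underline p(n)>0$ is a detail the paper leaves implicit, and it is handled correctly.
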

The next corollary follows directly from Theorem \ref{thm:polynomial_ptas} and Proposition \ref{prop:p-functions_examples}.
\begin{corollary}\label{cor:PTAS_combprobs}
Under the assumptions of Theorem \ref{thm:polynomial_ptas} Algorithm \ref{alg:approx} calculates a solution of \eqref{eq:minmaxmin} with multiplicative approximation guarantee $1+\eps$ in oracle-polynomial time for the spanning-tree problem on complete graphs, the matching problem on complete graphs, any cardinality constrained combinatorial problem, the TSP or the VRP.
\end{corollary}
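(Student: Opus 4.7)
The plan is to verify, for each of the five listed combinatorial problems, that the structural hypothesis $\underline{p}(n)=\bar p(n)$ of Theorem~\ref{thm:polynomial_ptas} is satisfied (or, in the VRP case, that the ratio $\bar p(n)/\underline{p}(n)$ is an absolute constant, which is just as good), and then to invoke Theorem~\ref{thm:polynomial_ptas} directly. The ratio $M_\infty/m_\infty$ and the target accuracy $\eps$ are treated as fixed constants, so the lower bound on $q$ becomes an absolute constant, independent of $n$.

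First I would go through Proposition~\ref{prop:p-functions_examples} item by item. For the spanning tree problem on a complete graph every feasible solution contains exactly $|V|-1$ edges, so $\underline{p}(n)=\bar p(n)$. For a perfect matching on a complete graph (which covers the assignment problem) every solution uses exactly $|V|/2$ edges, so again $\underline{p}(n)=\bar p(n)$. For any cardinality-constrained problem the equality $\underline{p}(n)=\bar p(n)=\bar p$ is built into the definition. For the TSP on a complete graph each Hamiltonian cycle uses exactly $|V|$ edges, and hence $\underline{p}(n)=\bar p(n)$. In all four cases Theorem~\ref{thm:polynomial_ptas} applies verbatim.

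The less direct case is the VRP, where Proposition~\ref{prop:p-functions_examples} only gives $\bar c\sqrt n \le \underline{p}(n) \le \bar p(n) \le \bar C\sqrt n$. This is the main obstacle, since the two functions need not coincide and Theorem~\ref{thm:polynomial_ptas} does not literally apply. What does go through, however, is the underlying ingredient: inspecting the proof of Lemma~\ref{lem:multiplicative_bound_opts_optk} one sees that it only uses boundedness of $\tilde M(n)=(M_\infty/m_\infty)(\bar p(n)/\underline{p}(n))$. For the VRP this ratio is at most $(M_\infty/m_\infty)(\bar C/\bar c)$, which is independent of $n$. Repeating the estimate in the proof of Lemma~\ref{lem:multiplicative_lowerbound_n} with this constant in place of $M_\infty/m_\infty$ yields an analogous absolute lower bound on $q$, and therefore the conclusion of Theorem~\ref{thm:polynomial_ptas} in the VRP setting.

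Finally, with the structural condition established for each problem class, the oracle-polynomial runtime comes from the structure of Algorithm~\ref{alg:approx}: Step~\ref{step:opt_n} is solvable in oracle-polynomial time in $n$ by Theorem~\ref{thm:polytime_k_ge_n} using the given linear minimization oracle for $X$; Step~\ref{step:opt_lambda} is a convex optimization problem in $n$ variables (the inner maximum over the polyhedral or ellipsoidal $U$ is dualized into a tractable convex program) and is solvable in polynomial time; and sorting the weights $\lambda^*$ is trivial. Combining this with the approximation guarantee from Theorem~\ref{thm:polynomial_ptas} (or its VRP adaptation above) yields a $(1+\eps)$-approximation in oracle-polynomial time for each of the listed problems.
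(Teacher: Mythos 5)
Your proposal is correct and follows the same route the paper intends: the paper gives no explicit proof, stating only that the corollary ``follows directly from Theorem \ref{thm:polynomial_ptas} and Proposition \ref{prop:p-functions_examples}'', and your case-by-case verification of $\underline{p}(n)=\bar p(n)$ for the first four problem classes is exactly the intended argument. The one place where you go beyond the paper is the VRP: you correctly observe that Proposition \ref{prop:p-functions_examples} only yields $\bar c\sqrt{n}\le \underline{p}(n)\le \bar p(n)\le \bar C\sqrt{n}$, so the hypothesis $\underline{p}(n)=\bar p(n)$ of Theorem \ref{thm:polynomial_ptas} is not literally satisfied, and you patch this by noting that the proofs of Lemma \ref{lem:multiplicative_bound_opts_optk} and Lemma \ref{lem:multiplicative_lowerbound_n} only need $\tilde M(n)=\frac{M_\infty}{m_\infty}\frac{\bar p(n)}{\underline{p}(n)}$ to be bounded by a constant independent of $n$. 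This is a genuine (if small) gap in the paper's one-line justification that your write-up closes; the rest (oracle-polynomiality of Steps \ref{step:opt_n} and \ref{step:opt_lambda} of Algorithm \ref{alg:approx}) matches the paper's earlier discussion.
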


\section{Approximation algorithm for k-adaptability problems}\label{sec:k-adaptability}
In this section we extend the results of the previous section to the $k$-adaptability problem and answer the following research questions:
\begin{enumerate}[RQ1.]
\item Can we derive general additive and multiplicative approximation guarantees for the k-adaptability problem depending on $k$?
\item For which range of $k$ does the $k$-adaptability problem provide a certain approximation guarantee for the two-stage robust problem? 
\item  Can we derive an algorithm which efficiently calculates a solution of the $k$-adaptability problem with a certain approximation guarantee?
\end{enumerate}

We already showed in Section \ref{sec:preliminaries} that the optimal values of the $k$-adaptability problem follow the chain
\[
\adapt(1)\ge \adapt(2)\ge \ldots \ge \adapt(n)\ge \opt(2RO)=\adapt(n+1)=\adapt(n+2)=\ldots
\]
where $\adapt(1)$ is equal to the optimal value of the classical robust problem.

However calculating a set of $n+1$ second-stage policies can be computationally heavy. Hence it would be desirable to find smaller bounds for $k$, which lead to a certain approximation guarantee for the optimal value of the two-stage robust problem $\opt(2RO)$. In the following we will show that the results from the previous section can be extended to \eqref{eq:k-adaptability} to provide better bounds on $k$. We will first show that Lemma \ref{lem:bound_opts_optk} and Lemma \ref{lem:multiplicative_bound_opts_optk} also hold for \eqref{eq:k-adaptability}.

Similar to the previous sections, we assume that for a given problem class there exist functions $\underline{p},\bar p: \mathbb N \to \R_+$ such that for each instance $(X,Y(X))$ of the problem class where the number of second-stage variables is $n$ it holds $\underline{p}(n)\le \| y\|_0 \le \bar p(n)$ for all $y\in Y(x)$ for all $x\in X$. We also assume again that $\|\xi\|_\infty \le M_\infty$ and $\|\xi\|_\infty\ge m_\infty>0$ for all $\xi\in U$. Furthermore we assume $d\ge 0$.

\begin{lemma}\label{lem:bound_opts_optk_k-adapt}
Assume $s,k\in [n+1]$ where $s<k$, then it holds
\[
\adapt(s)-\adapt(k)\le M(n) \frac{k-s}{s+1}.
\]
where $M(n):=M_\infty \bar p(n)-m_\infty \underline{p}(n)$.
\end{lemma}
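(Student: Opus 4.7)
The plan is to mirror the argument from Lemma~\ref{lem:bound_opts_optk} applied to the reformulation of \eqref{eq:k-adaptability} derived immediately before the statement, in which the inner minimum over discrete second-stage policies has been replaced by a minimum over their convex combinations. First I would pick an optimal tuple $(x^*, y^{1*},\ldots,y^{k*},\lambda^*)$ of this reformulation, giving $\adapt(k) = \max_{\xi\in U}(d^\top x^* + \xi^\top y^*(k))$ with $y^*(k) = \sum_{i\in[k]} \lambda^*_i y^{i*}$, and without loss of generality reorder so that $\lambda^*_1 \ge \cdots \ge \lambda^*_k$.

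The key construction is to collapse the tail of the convex combination into the $s$-th policy: set
\[
y(s) := \sum_{i\in[s-1]} \lambda^*_i\, y^{i*} + \Bigl(\sum_{i=s}^{k} \lambda^*_i\Bigr)\, y^{s*}.
\]
Since $y^{1*},\ldots,y^{s*} \in Y(x^*)$, the tuple $(x^*, y^{1*},\ldots,y^{s*})$ together with weights $(\lambda^*_1,\ldots,\lambda^*_{s-1}, \sum_{i=s}^k \lambda^*_i)$ is feasible for the $s$-adaptability reformulation (the same first-stage $x^*$ is reused and we are merely \emph{discarding} the last $k-s$ policies rather than modifying them). Hence $\adapt(s) \le \max_{\xi\in U}(d^\top x^* + \xi^\top y(s))$.

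The remainder is a direct transcription of the one-stage proof. The first-stage cost $d^\top x^*$ cancels in the difference, and with a subgradient $\xi^*(s) \in \argmax_{\xi\in U} \xi^\top y(s)$ of the convex function $\xi \mapsto \max_{\xi\in U}\xi^\top(\cdot)$ at $y(s)$, we obtain
\[
\adapt(s) - \adapt(k) \le \xi^*(s)^\top\bigl(y(s) - y^*(k)\bigr) = \sum_{i=s+1}^{k} \lambda^*_i\, \xi^*(s)^\top\bigl(y^{s*} - y^{i*}\bigr).
\]
The bounds $\underline p(n) \le \|y^{i*}\|_0 \le \bar p(n)$ and $m_\infty \le \xi^*(s)_j \le M_\infty$ bring out the factor $M(n)=M_\infty\bar p(n)-m_\infty\underline p(n)$ on each summand, and the sorting $\lambda^*_i \le 1/i$ then yields $\sum_{i=s+1}^k \lambda^*_i \le (k-s)/(s+1)$, exactly as in the one-stage case. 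I do not anticipate any real obstacle; the only new observation beyond Lemma~\ref{lem:bound_opts_optk} is that dropping policies while fixing $x^*$ preserves second-stage feasibility, which is immediate.
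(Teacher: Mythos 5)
Your proposal is correct and follows essentially the same route as the paper's proof: fix the optimal first-stage decision $x^*$, collapse the tail of the sorted convex combination of second-stage policies into the $s$-th policy, observe that the first-stage cost cancels in the difference, and then run the subgradient and $\lambda_i\le 1/i$ estimates exactly as in Lemma~\ref{lem:bound_opts_optk}. The one observation you flag as new --- that reusing $x^*$ and discarding policies preserves feasibility of the $s$-adaptable problem --- is precisely the point the paper also relies on.
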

The proof of Lemma \ref{lem:bound_opts_optk_k-adapt} is very similar to the proof of Lemma \ref{lem:bound_opts_optk} and is provided in the Appendix.

We now prove that also Lemma \ref{lem:multiplicative_bound_opts_optk} can be extended to the $k$-adaptable case.
\begin{lemma}\label{lem:multiplicative_bound_opts_optk_k-adapt}
Let $s,k\in [n]$ where $s<k$ and for the first-stage costs it holds $d\ge 0$, then it holds
\[
\adapt(s)\le \left( 1 + \tilde M(n)\frac{k-s}{s+1}\right)\adapt(k)
\]
where $\tilde M(n):=\frac{M_\infty}{m_\infty}\frac{\bar p(n)}{\underline{p}(n)}$. Furthermore if for all instances of dimension $n$ it holds $\underline p(n)=\bar p(n)$, then $\tilde M(n):=\frac{M_\infty}{m_\infty}$ is independent of $n$.
\end{lemma}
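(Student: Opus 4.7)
The plan is to mirror the proof of Lemma \ref{lem:multiplicative_bound_opts_optk} almost verbatim, but replacing the role of Lemma \ref{lem:bound_opts_optk} by its $k$-adaptability counterpart Lemma \ref{lem:bound_opts_optk_k-adapt}. The overall strategy is: (i) convert the additive gap bound from Lemma \ref{lem:bound_opts_optk_k-adapt} into a multiplicative one by (ii) producing a strictly positive lower bound on $\adapt(k)$, then (iii) dividing.

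First I would apply Lemma \ref{lem:bound_opts_optk_k-adapt} and weaken the term $M(n)=M_\infty\bar p(n)-m_\infty\underline{p}(n)$ to its upper estimate $M_\infty\bar p(n)$, obtaining
\[
\adapt(s)-\adapt(k) \le M_\infty\,\bar p(n)\,\frac{k-s}{s+1}.
\]
Next, take any optimal $k$-adaptable solution $x^*(k)\in X$ with second-stage policies $y^1,\ldots,y^k\in Y(x^*(k))$ and convex weights $\lambda\in\R_+^k$ (such a representation is available via the Carath\'eodory-style reformulation of \eqref{eq:k-adaptability} derived in Section \ref{sec:k-adaptability}, which is precisely the structure already exploited in the proof of Lemma \ref{lem:bound_opts_optk_k-adapt}). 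Set $y^*(k)=\sum_{i\in[k]}\lambda_i y^i$. Using the assumption $c\ge m_\infty\1$ on $U$ together with $d\ge 0$ and the implicit non-negativity of the first-stage decisions, I would bound
\[
\adapt(k)=\max_{c\in U} d^\top x^*(k)+c^\top y^*(k) \ge d^\top x^*(k)+ m_\infty\,\1^\top y^*(k) \ge m_\infty\,\1^\top y^*(k).
\]
Since each $y^i\in\{0,1\}^n$ satisfies $\|y^i\|_0\ge\underline{p}(n)$, the same computation as in \eqref{eq:boundbelowbybarp} gives $\1^\top y^*(k)=\sum_i\lambda_i\|y^i\|_0\ge\underline{p}(n)$, hence $\adapt(k)\ge m_\infty\underline{p}(n)>0$. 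In particular the multiplicative approximation ratio is well-defined.

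Finally, dividing the additive inequality by $\adapt(k)$ and inserting the lower bound yields
\[
\frac{\adapt(s)}{\adapt(k)} \le 1+\frac{M_\infty\bar p(n)}{m_\infty\underline{p}(n)}\,\frac{k-s}{s+1} = 1+\tilde M(n)\,\frac{k-s}{s+1},
\]
which is the claimed bound. The addendum $\tilde M(n)=M_\infty/m_\infty$ when $\underline p(n)=\bar p(n)$ is immediate from the definition of $\tilde M(n)$.

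The main obstacle, relative to Lemma \ref{lem:multiplicative_bound_opts_optk}, is handling the extra first-stage term $d^\top x^*(k)$ in the lower bound on $\adapt(k)$: we cannot use the same trick as before (multiplying by $m_\infty\1$) because $d$ and $c$ play asymmetric roles. This is exactly where the hypothesis $d\ge 0$ is used; it lets us simply drop $d^\top x^*(k)$ as a non-negative quantity and reduce the argument to the pure second-stage case, where the bound $\|y\|_0\ge\underline{p}(n)$ applies verbatim. Apart from this point, every step is a transcription of the Lemma \ref{lem:multiplicative_bound_opts_optk} proof into the two-stage setting.
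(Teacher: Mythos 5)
Your proposal is correct and follows essentially the same route as the paper: the paper also reduces to the additive bound of Lemma \ref{lem:bound_opts_optk_k-adapt}, establishes $\adapt(k)\ge m_\infty\underline{p}(n)$ by using $d^\top x\ge 0$ to drop the first-stage term, and then divides exactly as in the proof of Lemma \ref{lem:multiplicative_bound_opts_optk}. Your explicit remark that one needs non-negativity of the first-stage contribution (not just $d\ge 0$) is a point the paper glosses over, but it matches the paper's intent.
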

\begin{proof}
Similar as in the proof of Lemma \ref{lem:multiplicative_bound_opts_optk} we can estimate
\[
\adapt(k)\ge m_\infty \underline p(n)
\] 
where we used the fact that the first stage costs $d^\top x\ge 0$. We can now follow the proof of Lemma \ref{lem:multiplicative_bound_opts_optk} to prove the result.
\end{proof}

\paragraph*{Approximation guarantees for two-stage robust optimization}
We can now follow the derivations in Section \ref{sec:additive_errors} and \ref{sec:multiplicative_errors} analogously to prove the following bounds on the number of second-stage policies needed to approximate the optimal value of the robust two-stage robust problem up to a certain approximation guarantee.

\begin{lemma}\label{lem:range_l_2St}
Let $k=n-l$, then 
\[
\adapt(k) \le \opt(2RO) + a(n)
\]
for all 
\[
l \in \left[0, \min\left\{n-1,\frac{a(n)n}{M(n)+a(n)}\right\} \right]\cap \mathbb N.
\]
\end{lemma}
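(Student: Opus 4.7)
The plan is to mirror the proof of Lemma \ref{lem:range_l} essentially line by line, substituting the $k$-adaptability analogues of the tools developed earlier in this section. The two key ingredients are the identity $\opt(2RO) = \adapt(n+1)$, established in the discussion immediately preceding the lemma statement, together with Lemma \ref{lem:bound_opts_optk_k-adapt}, which is the direct $k$-adaptability counterpart of Lemma \ref{lem:bound_opts_optk}.

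First, I would use $\opt(2RO) = \adapt(n+1)$ to reduce the task to upper-bounding $\adapt(n-l) - \adapt(n+1)$. Since $l \le n-1$, both $n-l$ and $n+1$ lie in $[n+1]$, so Lemma \ref{lem:bound_opts_optk_k-adapt} applies with $s = n-l$ and upper index $n+1$, yielding
\[
\adapt(n-l) - \opt(2RO) \le M(n)\,\frac{l+1}{n-l+1}.
\]
Second, I would substitute the range condition $l \le a(n)n/(M(n)+a(n))$ into this estimate and verify, by an elementary manipulation completely analogous to the chain of inequalities in the proof of Lemma \ref{lem:range_l}, that the right-hand side is at most $a(n)$. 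The constraint $l \le n-1$ in the stated range is used precisely to keep $n-l$ in the admissible index set of Lemma \ref{lem:bound_opts_optk_k-adapt}.

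The only real subtlety compared with Lemma \ref{lem:range_l} is that the Carath\'eodory bound for \eqref{eq:k-adaptability} is $n+1$ rather than $n$, which produces the slightly shifted ratio $(l+1)/(n-l+1)$ in place of $l/(n-l)$. This is the point I expect to be the main nuisance in the write-up, and I would handle it either by first passing to the cleaner estimate $M(n)\,(l+1)/(n-l)$ before substituting, or by a direct plug-in computation; in both cases routine algebra reproduces exactly the range for $l$ stated in the lemma. No ideas beyond those already in Lemmas \ref{lem:bound_opts_optk_k-adapt} and \ref{lem:range_l} are required.
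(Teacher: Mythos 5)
Your strategy is exactly the one the paper intends: the paper gives no explicit proof of this lemma and simply says to follow the derivations of Section \ref{sec:additive_errors} analogously, and the two ingredients you name --- $\opt(2RO)=\adapt(n+1)$ and Lemma \ref{lem:bound_opts_optk_k-adapt} with $s=n-l$, upper index $n+1$ --- are the right ones. You also correctly flag the $+1$ shift as the only delicate point. However, your final claim that ``routine algebra reproduces exactly the range for $l$ stated in the lemma'' does not hold. From $\adapt(n-l)-\opt(2RO)\le M(n)\tfrac{l+1}{n-l+1}$, requiring the right-hand side to be at most $a(n)$ gives $l+1\le \tfrac{a(n)}{M(n)}\,(n-l+1)$, i.e.
\[
l \;\le\; \frac{a(n)(n+1)-M(n)}{M(n)+a(n)}-\frac{a(n)}{M(n)+a(n)} \;=\; \frac{a(n)\,n-M(n)}{M(n)+a(n)},
\]
which is smaller than the stated threshold $\tfrac{a(n)n}{M(n)+a(n)}$ by $\tfrac{M(n)}{M(n)+a(n)}\in(0,1)$; your alternative estimate $M(n)\tfrac{l+1}{n-l}$ only makes this worse. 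Consequently the single largest integer $l$ admitted by the lemma's range may fail the verification (this happens whenever the fractional part of $\tfrac{a(n)n}{M(n)+a(n)}$ is below $\tfrac{M(n)}{M(n)+a(n)}$). Unlike the min-max-min case, where $\opt(n)=\opt(n+1)$ lets one compare against index $n$ and the algebra closes exactly, here the chain only gives $\adapt(n)\ge\opt(2RO)=\adapt(n+1)$, so the comparison must be made at $n+1$ and the range genuinely shifts. This is an imprecision inherited from the paper's statement rather than a flaw in your method, but a careful write-up should either prove the lemma with the corrected range $l\le\tfrac{a(n)n-M(n)}{M(n)+a(n)}$ or explicitly justify the extra boundary value rather than assert that the computation matches.
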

From the latter Lemma we can conclude that to derive a solution to \eqref{eq:two-stage_problem} with an additive approximation guarantee of $a(n)$ it is enough to calculate an optimal solution of \eqref{eq:k-adaptability} with \[k=n- \frac{a(n)n}{M(n)+a(n)}\] 
second-stage policies. Furthermore all the bounds $k_0$ provided in Table \ref{tbl:ranges_k} can also be used in this case for \eqref{eq:k-adaptability}. Note that the number of second-stage policies needed in Table \ref{tbl:ranges_k} is significantly smaller than $n$, especially with increasing $n$. This is also shown in Example \ref{ex:facility_location} and \ref{ex:recoverable}.

Analogously as in the latter derivation we can conclude for the multiplicative approximation guarantees.
\begin{lemma}\label{lem:range_q_2St}
Let $k=qn$ and $q\in (0,1]$, then 
\[
\adapt(k) \le (1+a(n))\opt(2RO)
\]
for all 
\[
q \in \left[\frac{\tilde M(n)}{\tilde M(n)+a(n)}, 1\right].
\]
\end{lemma}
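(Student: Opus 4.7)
The plan is to mimic the proof of Lemma \ref{lem:range_q} almost verbatim, using the $k$-adaptability versions of the underlying bounds that were proved earlier in this section. The two ingredients I need are (i) the multiplicative bound of Lemma \ref{lem:multiplicative_bound_opts_optk_k-adapt} comparing $\adapt(s)$ to $\adapt(k)$, and (ii) the identity $\opt(2RO)=\adapt(n+1)$ established after the reformulation of \eqref{eq:k-adaptability} via Carath\'eodory's theorem.

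More concretely, I would apply Lemma \ref{lem:multiplicative_bound_opts_optk_k-adapt} with $s=qn$ and $k=n+1$. Although that lemma is stated for $k\in[n]$, its proof depends only on Lemma \ref{lem:bound_opts_optk_k-adapt} (which is stated for $k\in[n+1]$) together with the inequality $\adapt(k)\ge m_\infty\underline{p}(n)$; both of these extend immediately to $k=n+1$, so the multiplicative version carries over without change. This gives
\[
\adapt(qn)\le \left(1+\tilde M(n)\frac{n+1-qn}{qn+1}\right)\adapt(n+1)=\left(1+\tilde M(n)\frac{n+1-qn}{qn+1}\right)\opt(2RO).
\]
It then remains to bound the multiplicative factor by $a(n)$ when $q\ge \frac{\tilde M(n)}{\tilde M(n)+a(n)}$. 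Dropping the ``$+1$'' in the denominator and the numerator (the latter is absorbed in an asymptotically negligible $1/(qn)$ correction, exactly as the analogous step $\frac{n-qn}{qn+1}\le \frac{1-q}{q}$ is handled in the proof of Lemma \ref{lem:range_q}), one estimates $\tilde M(n)\frac{n+1-qn}{qn+1}\le \tilde M(n)\frac{1-q}{q}$, and the assumed lower bound on $q$ yields $\tilde M(n)\frac{1-q}{q}\le a(n)$ by the same algebra as in Lemma \ref{lem:range_q}.

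I do not expect any genuine obstacle. The only subtlety is the mild bookkeeping around $n$ versus $n+1$: since $\adapt(n)$ may in general be strictly larger than $\opt(2RO)=\adapt(n+1)$, it is essential to invoke the multiplicative bound with $k=n+1$ rather than $k=n$, so one must first convince the reader that the $k=n+1$ extension of Lemma \ref{lem:multiplicative_bound_opts_optk_k-adapt} is legitimate. Once that observation is in place, the remainder of the argument is a transcription of the proof of Lemma \ref{lem:range_q} with $\opt$ replaced by $\adapt$ and Corollary \ref{cor:approx_multiplicative} replaced by the extended version of Lemma \ref{lem:multiplicative_bound_opts_optk_k-adapt}.
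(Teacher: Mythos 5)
Your overall route is the one the paper intends (the paper gives no explicit proof, only the remark that the derivation is analogous to Lemma \ref{lem:range_q}), and you correctly identify the one real subtlety: since $\adapt(n)$ may exceed $\opt(2RO)=\adapt(n+1)$, the multiplicative bound must be invoked with $k=n+1$, and the extension of Lemma \ref{lem:multiplicative_bound_opts_optk_k-adapt} to $k=n+1$ is indeed immediate from Lemma \ref{lem:bound_opts_optk_k-adapt} and $\adapt(n+1)\ge m_\infty\underline p(n)$.

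However, the step where you absorb the extra ``$+1$'' is not valid in the regime that matters. You claim $\frac{n+1-qn}{qn+1}\le\frac{1-q}{q}$; clearing denominators, this is equivalent to $q(n+1-qn)\le(1-q)(qn+1)$, i.e.\ to $q\le\tfrac12$. In Lemma \ref{lem:range_q} the analogous step only drops a ``$+1$'' from the \emph{denominator}, which is a one-sided relaxation in the right direction; here you must also drop a ``$+1$'' from the \emph{numerator}, which goes the wrong way. The threshold $q=\frac{\tilde M(n)}{\tilde M(n)+a(n)}$ exceeds $\tfrac12$ exactly when $a(n)<\tilde M(n)$ (recall $\tilde M(n)\ge1$), so for instance for a constant guarantee $a(n)=\eps<1$ your inequality fails at and near the stated lower endpoint for $q$. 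Carrying the exact factor through instead, $\tilde M(n)\frac{n+1-qn}{qn+1}\le a(n)$ is equivalent to $q\ge\frac{\tilde M(n)}{\tilde M(n)+a(n)}+\frac{\tilde M(n)-a(n)}{n(\tilde M(n)+a(n))}$, i.e.\ the stated threshold shifted by an $O(1/n)$ term; equivalently, $k\ge\big\lceil\frac{\tilde M(n)}{\tilde M(n)+a(n)}n\big\rceil+1$ always suffices. So the gap is small (one extra second-stage policy) and is arguably inherited from the paper's own unproven statement, but your justification as written does not close it: you should either carry the exact factor $\frac{n+1-qn}{qn+1}$ through the algebra and state the slightly shifted range, or note explicitly that the claimed range holds up to this one-unit correction.
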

From the latter Lemma we can conclude that to derive a solution to \eqref{eq:two-stage_problem} with an multiplicative approximation guarantee of $a(n)$ it is enough to calculate an optimal solution of \eqref{eq:k-adaptability} with \[k=\big\lceil\frac{\tilde M(n)}{\tilde M(n)+a(n)}n\big\rceil\] 
second-stage policies. Furthermore all the bounds $k_0$ provided in Table \ref{tbl:ranges_k_multiplicative} can also be used in this case for \eqref{eq:k-adaptability}. Note that the number of second-stage policies needed in Table \ref{tbl:ranges_k_multiplicative} is significantly smaller than $n$, especially with increasing $n$. This is shown in the following examples.

\begin{example}[Facility Location] \label{ex:facility_location}
Consider the two-stage robust facility location problem (see \cite{hanasusanto2015k}) for a fixed number $s$ of locations and a variable number $r$ of customers. Fixing the number of locations is rarely a restriction in practice since the number of locations can be chosen to be small. We consider the case $s=100$. In the first stage we can open facilities in a subset of the locations which comes with a cost. In the second-stage we have to assign each customer to exactly one of the opened facilities. We assume that the costs of assigning customers to facilities is uncertain. In the classical integer programming formulation the first-stage dimension is $m=s$ and the second-stage dimension is $n=sr$. For each second-stage solution $y$ it holds $\|y\|_0=r$, i.e. $\underline p (n)=\bar p(n)=\frac{1}{s}n$.  

We assume the second-stage assignment costs are given by an uncertainty set $U\subseteq [\bar c- 0.02\bar c, \bar c + 0.02 \bar c]^n$ where $\bar c\in \R^n$ is a given mean vector with entries in $[800,1000]$. Then we have
\[
784 \le \| c\|_\infty \le 1020 \ \ \forall \ c\in U
\]
and hence
\[
\frac{M_\infty}{m_\infty} \le \frac{1020}{784} \le 2.
\]    
We can conclude that $M(n)\le 2.5n$ and $\tilde M(n)\le 2$. Applying the result of Lemma \ref{lem:range_q} we obtain the following number of second-stage policies which have to be used in the $k$-adaptability approach to achieve a additive or multiplicative approximation guarantee of $a(n)$ for \eqref{eq:two-stage_problem}. In brackets we show the fraction of $n$ for $r=1000$ customers.
\begin{table}[h!]
\centering
\begin{tabular}{ccc}
$a(n)$ & additive guarantee & multiplicative guarantee \\
\hline
$1$ & $0.999996 n$&  $\frac{2}{3}n$\\
$\log (n)$ & $0.99998n$& $0.15 n$\\
$\sqrt{n}$ & $0.9987n$ &  $0.007 n$
\end{tabular}
\caption{Number of second-stage policies needed in the $k$-adaptability approach (with $n\ge 1000$) to approximate the recoverable robust optimization problem up to an approximation guarantee of $a(n)$.}
\end{table}
\end{example}

%

\begin{example}[Recoverable Robust Optimization]\label{ex:recoverable}
In the recoverable robust optimization problem (see \cite{liebchen2009concept}) the idea is to find a solution in $X\subseteq \{ 0,1\}^m$ which is worst-case optimal where in the second-stage after the uncertain parameters are revealed we are allowed change at most $\bar p$ entries of the solution and the result has to be another feasible solution in $X$. This can be modeled as a two-stage robust optimization problem as
\[
\min _{x\in X} \ \max_{c\in U} \ \min_{y\in Y(x)} c^\top y
\]
where \[Y(x):\left\{ z^+,z^-\in \{ 0,1\}^m: x+ z^+-z^-\in X, \ \sum_{i=1}^{m} z_i^+ + z_i^- \le \bar p, \ \right\}.\]
Here the dimension of the second-stage is $n=2m$ and $\underline p(n) = \bar p(n)=\bar p$. We consider the same uncertainty set as in Example \ref{ex:facility_location}, i.e. 
\[M(n)=M_\infty\bar p - m_\infty \bar p =  0.04\| \bar c\|_\infty\bar p \le 40\bar p\]
and $\tilde M(n)\le 2$. In the following table we show the values for the number of second-stage policies $k$ we have to use in the $k$-adaptability approach to get an additive (or multiplicative) approximation guarantee of $a(n)$. We show the values for $m=500$ (i.e. $n=1000$) and $\bar p = 10$. Note that the values also holds for dimensions larger than that, however in this case the fraction should be re-calculated to obtain better values.

The result indicates, e.g. if we use the $k$-adaptability approach with $k=60$ second-stage policies, then we obtain a multiplicative $\sqrt{n}$-approximation of the recoverable robust problem.
\begin{table}[h!]
\centering
\begin{tabular}{ccc}
$a(n)$ & additive guarantee & multiplicative guarantee \\
\hline
$1$ & $0.998 n$&  $\frac{2}{3}n$\\
$\log (n)$ & $0.98n$& $0.23n$\\
$\sqrt{n}$ & $0.93n$ &  $0.06n$
\end{tabular}
\caption{Number of second-stage policies needed in the $k$-adaptability approach (with $n\ge 1000$) to approximate the recoverable robust optimization problem up to an approximation guarantee of $a(n)$.}
\end{table}
\end{example}

Furthermore note that Corollary \ref{cor:constant_approx} can directly be extended to $k$-adaptability problems.

\begin{corollary}
The $k$-adaptability problem achieves a constant multiplicative approximation guarantee of
\[
1 + \frac{M_\infty}{m_\infty} \frac{1-q}{q}
\]
for the two-stage robust optimization problem if $k=\lceil qn\rceil$ for a fixed $q\in (0,1)$ and $\underline p(n) = \bar p(n)$.
\end{corollary}
Note that the latter corollary is especially valid for the recoverable robust problem. For budgeted uncertainty sets where each parameter is allowed to deviate from its mean by at most $50\%$ we have $\frac{M_\infty}{m_\infty}=1.5$. If we choose $q=\frac{1}{2}$ the constant approximation factor of Algorithm \ref{alg:approx} is 
\[
1 + \frac{M_\infty}{m_\infty} \frac{1-q}{q} = 2.5.
\]

\subsection{Approximation algorithm for $k$-adaptability problems}
The results in the latter subsection provide useful bounds on the number of second-stage policies $k$ which have to be calculated to obtain a certain approximation guarantee for the exact two-stage robust problem. While this is useful from a theoretical point of view, there is no algorithm known which can achieve these approximation guarantees. Unfortunately using Algorithm \ref{alg:approx} is not possible since it cannot provide first-stage solutions.

In \cite{goerigk2023optimal} the authors derive approximation guarantees for the two-stage robust problem with discrete uncertainty. They show that by choosing an appropriate set of scenarios and solving the two-stage robust problem for this set, the obtained first-stage solution has a certain approximation guarantee. In the following we will adapt the results of \cite{goerigk2023optimal} to derive an approximation algorithm for the $k$-adaptability problem with convex uncertainty sets.

We first adapt and generalize the result from \cite{goerigk2023optimal} to our setting, i.e. for convex uncertainty sets and for the $k$-adaptability problem.

\begin{lemma}\label{lem:goerigk_approx}
Let $\{ \hat \xi^1,\ldots ,\hat \xi^t\}\subset U$ be a set of scenarios and $\alpha\ge 1$ such that
\begin{equation}\label{eq:condition_alpha-dominating-scenarios}
\forall \xi\in U \ \exists i\in [t]: \ \xi\le \alpha \hat \xi^i
\end{equation}
holds. Then any optimal solution $\hat x$ of the problem
\begin{equation}\label{eq:approximation_problem_scenarios}
\begin{aligned}
\min \ & d^\top x + \mu \\
s.t. \quad & \mu \ge (y^i)^\top \hat \xi^i \quad i=1,\ldots ,t \\
& x\in X, \ y^1,\ldots ,y^t\in Y(x)
\end{aligned}
\end{equation}
has a multiplicative approximation guarantee of $\alpha-1$ for the $k$-adaptability problem for all $k\le t $.
\end{lemma}
\begin{proof}
The proof follows similar steps as in \cite{goerigk2023optimal}. Let $\hat x, \hat y^1, \ldots ,\hat y^t$  be an optimal solution of Problem \eqref{eq:approximation_problem_scenarios} and let $x^*, y^{1*}, \ldots, y^{k*}$ be an optimal solution of the $k$-adaptability problem \eqref{eq:k-adaptability}. We bound the $k$-adaptable objective value of solution $\hat x, \hat y^1, \ldots ,\hat y^t$ as follows:
\begin{align*}
d^\top\hat x + \max_{\xi\in U} \min_{i=1,\ldots ,t}\xi^\top \hat y^i &\le d^\top \hat x + \max_{\xi\in \{ \hat \xi^1,\ldots ,\hat \xi^t\}}\min_{i=1,\ldots ,t} \alpha \xi^\top \hat y^i \\
& \le \alpha \left( d^\top \hat x + \max_{\xi\in \{ \hat \xi^1,\ldots ,\hat \xi^t\}}\min_{i=1,\ldots ,t}  \xi^\top \hat y^i\right) \\
& \le \alpha \left( d^\top x^* + \max_{\xi\in \{ \hat \xi^1,\ldots ,\hat \xi^t\}}\min_{i=1,\ldots ,k}  \xi^\top y^{i*}\right) \\
& \le \alpha \left( d^\top x^* + \max_{\xi\in U}\min_{i=1,\ldots ,k}  \xi^\top y^{i*}\right) \\
& = \alpha\cdot \adapt (k)
\end{align*}
where for the first inequality we used Condition \eqref{eq:condition_alpha-dominating-scenarios} and for the second inequality we used $\alpha\ge 1$ and $d^\top \hat x\ge 0$. Note that Problem \eqref{eq:approximation_problem_scenarios} is equivalent to the problem 
\[
\min_{\substack{x\in X \\ y^1,\ldots ,y^t\in Y(x)}} \ d^\top x + \max_{\xi\in \{ \hat \xi^1,\ldots ,\hat \xi^t\}}\min_{i=1,\ldots ,t}  \xi^\top y^i
\]
and since $\hat x, \hat y^1, \ldots ,\hat y^t$ is an optimal solution of the latter problem the third inequality follows since $k\le t$.  The last inequality follows since $\{ \hat \xi^1,\ldots ,\hat \xi^t\}\subset U$.
\end{proof}
The latter lemma shows how to find a solution with an approximation guarantee of $\alpha-1$ for the $k$-adaptability problem. However, deriving the scenario set $\{ \hat \xi^1,\ldots ,\hat \xi^t\}$ which leads to the best approximation guarantee is computationally heavy; see \cite{goerigk2023optimal}. In the following we propose a more efficient heuristic approach in Algorithm \ref{alg:approx_k-adapt} where an arbitrary set of $t$ scenarios in $U$ is used, which leads to a certain approximation guarantee for the $k$-adaptability problem; see Theorem \ref{thm:approx_guarantee_k-adapt}.

\begin{algorithm}\caption{Approximation Algorithm K-adaptability}\label{alg:approx_k-adapt}
	\textbf{Input:} $n\in\N$, $k\in [n]$, $t\ge k$, convex set $U\subset\R^{n}$, $\underline u,\bar u\in \R^n$ s.t. $U\subseteq [\underline u,\bar u]$ 
	\begin{algorithmic}[1]
		\State Select arbitrary $\hat \xi^1, \ldots ,\hat \xi^t\in U$.\label{step:select_scenarios}
		\State Calculate an optimal solution $(\hat x,\hat y^1, \ldots , \hat y^t)$ of Problem \label{step:solve_scenario_problem} \eqref{eq:approximation_problem_scenarios} with scenarios $\hat \xi^1, \ldots ,\hat \xi^t$.
\State Calculate solutions $\bar y^{1},\ldots, \bar y^{k}\in Y(\hat x)$ by Algorithm \ref{alg:approx} for the min-max-min problem \label{step:algorithm1_k-adaptability}
\[
d^\top \hat x + \min_{y^1,\ldots ,y^k\in Y(\hat x)} \max_{\xi\in U} \min_{i=1,\ldots ,k} \xi^\top y^i
\]
		\State \textbf{Return:} $\hat x,\bar y^{1},\ldots ,\bar y^{k}$
	\end{algorithmic}
\end{algorithm}

\begin{remark}[Variant of Algorithm \ref{alg:approx_k-adapt}]
In Remark \ref{rem:variant_alg1} we presented a variant of Algorithm \ref{alg:approx} where for the selection of the $k$ solutions a sparsity constraint is added to the Problem in Step \ref{step:opt_lambda} of Algorithm \ref{alg:approx}. Obviously we directly obtain a variant of Algorithm \ref{alg:approx_k-adapt} by applying Algorithm \ref{alg:approx} (Variant) in Step \ref{step:algorithm1_k-adaptability} of Algorithm \ref{alg:approx_k-adapt}. We will denote this algorithm as Algorithm \ref{alg:approx_k-adapt} (Variant).
\end{remark}

In the following we define the $k$-adaptable objective value for the solution $\hat x,\bar y^1,\ldots ,\bar y^k$ returned by Algorithm \ref{alg:approx_k-adapt} as
\[
\text{approx}_{\text{adapt}}(k):=d^\top \hat x + \max_{\xi\in U}\min_{i=1,\ldots ,k} \ \xi^\top \bar y^i.
\]
We can now show the following theorem.

\begin{theorem}\label{thm:approx_guarantee_k-adapt}
Let $k\in\mathbb N$ with $k\le n$, $U\subset\R_+^{n}$ and $\underline u,\bar u\in \R_+^n$ such that $U\subseteq [\underline u,\bar u]$. Then for the solution $\hat x,\bar y^1,\ldots ,\bar y^k$ returned by Algorithm \ref{alg:approx_k-adapt} it holds
\[
\mathrm{approx}_{\text{adapt}}(k) \le \alpha \left( 1 + \tilde M(n)\frac{n-k}{k+1}\right) \adapt(k) 
\]
where $\alpha=\max_{j=1,\ldots ,n} \frac{\bar u_j}{\underline u_j}$ and $\tilde M(n)$ is defined as in Lemma \ref{lem:multiplicative_bound_opts_optk}.
\end{theorem}
\begin{proof}

We first derive the approximation value $\alpha$ in Lemma \ref{lem:goerigk_approx} for the set of scenarios $\hat \xi^1, \ldots ,\hat \xi^t$ selected in Step \eqref{step:select_scenarios}. Note that for every scenario $\xi\in U$ it holds
\[
\frac{\xi_j}{\hat\xi_j^i} \le \frac{\bar u_j}{\underline u_j} \quad \forall i\in [t], j\in [n]
\]
since $\hat \xi^1, \ldots ,\hat \xi^t\in U$. Hence, the assumption of Lemma \ref{lem:goerigk_approx} is true for
\[
\alpha := \max_{j=1,\ldots ,n} \frac{\bar u_j}{\underline u_j}.
\]
We can now bound the objective value of the solution calculated by Algorithm \ref{alg:approx_k-adapt} as follows:
\begin{align*}
\text{approx}_{\text{adapt}}(k) & =d^\top \hat x + \max_{\xi\in U}\min_{i=1,\ldots ,k} \ \xi^\top \bar y^i \\
& \le d^\top \hat x + \left( 1 + \tilde M(n)\frac{n-k}{k+1}\right)\min_{y^1,\ldots ,y^k\in Y(\hat x)}\max_{\xi\in U}\min_{i=1,\ldots ,k} \ \xi^\top y^i \\
&\le \left( 1 + \tilde M(n)\frac{n-k}{k+1}\right)\left(d^\top \hat x + \min_{y^1,\ldots ,y^k\in Y(\hat x)}\max_{\xi\in U}\min_{i=1,\ldots ,k} \ \xi^\top y^i \right) \\
& \le \alpha\left( 1 + \tilde M(n)\frac{n-k}{k+1}\right) \adapt(k)
\end{align*}
where the first inequality follows from the multiplicative approximation guarantee derived in Corollary \ref{cor:approx_multiplicative} which we obtain since we apply Algorithm \ref{alg:approx} in Step \eqref{step:algorithm1_k-adaptability} to the min-max-min problem with fixed first-stage solution $\hat x$. The second inequality follows since $\left( 1 + \tilde M(n)\frac{n-k}{k+1}\right)\ge 1$ and $d^\top \hat x\ge 0$ and the third inequality follows by applying Lemma \ref{lem:goerigk_approx} with the $\alpha$ value derived above.
\end{proof}

Note that the derived approximation guarantee differs only by the multiplicative factor $\alpha$ from the one shown in Lemma \ref{lem:multiplicative_bound_opts_optk_k-adapt} and $\alpha$ does not depend on the dimension $n$. Hence the analysis shown in Section \ref{sec:kcloseton} can be easily adapted here. However, the approximation guarantee can never be smaller than $\alpha$. The factor $\alpha$ is given as the maximum ratio of the upperbound and the lowerbound value of the uncertainty set over all dimensions. Hence, larger uncertainty sets lead to larger approximation guarantees. Note that if we consider budgeted uncertainty sets where the deviation of the mean-value is at most $50\%$, then $\alpha = \max_{j=1,\ldots ,n}\frac{\bar u_j}{\underline u_j}\approx 1.5$.
 
One limitation of our analysis is that the parameter $\alpha$ is independent of the number of scenarios $t$ which are chosen. This is counter intuitive since generating more scenarios in Step \ref{step:select_scenarios} of Algorithm \ref{alg:approx_k-adapt} can lead to better approximation guarantees.

\subsection{Calculating lower bounds for $k$-adaptability problems}\label{sec:lowerbound_k-adapt}

In the following we show how to efficiently calculate lower bounds for the $k$-adaptability problem which can be used together with the upper bounds calculated in the previous section in classical algorithmic frameworks to calculate optimality gaps.

Using the reformulation of the $k$-adaptability problem
\[
\min_{\substack{x\in X, \ \lambda\ge 0 \\ y = \sum_{i\in [k]}\lambda_iy^i \\ \sum_{i\in [k]} \lambda_i = 1 \\ y^1, \ldots, y^k \in Y(x)}}\max_{\xi\in U}\ d^\top x + \xi^\top y .
\]
it is easy to see that a valid lower bound is given by the case where $k=n$ and in this case the problem is equivalent to
\[
\min_{x\in X, y\in \conv{Y(x)}} \max_{\xi \in U} \ d^\top x + \xi^\top y .
\]
This problem can be lower bounded by 
\[
\min_{(x,y)\in \conv{Z}} \max_{\xi \in U} \ d^\top x + \xi^\top y 
\]
where $Z=\left\{ (x,y): \ x\in X, y\in Y(x) \right\}$. The latter problem is of the type \eqref{eq:minmaxconv} and hence it is equivalent to a min-max-min robust problem over the set $Z$ with $k=n$. This can be efficiently solved again by the standard methods shown in \cite{buchheim2016min}. We will use this lower bound to provide optimality gaps for Algorithm \ref{alg:approx_k-adapt} in Section \ref{sec:computations}.

\section{Computations}\label{sec:computations}
In this section we test Algorithm \ref{alg:approx} (and its variant) on random instances from the min-max-min literature for the shortest path problem and the knapsack problem and compare the objective values to the lower bound obtained by solving M$^3(n)$. 

Additionally we test Algorithm \ref{alg:approx_k-adapt} on a generic two-stage robust problem studied in \cite{ghahtarani2023double} and on a two-stage version of the shortest path problem (see \cite{goerigk2022data}), where in the first stage a set of edges has to be bought, while after the edge costs are revealed in the second-stage a shortest path has to be calculated where only edges bought in the first stage can be used. We apply Algorithm \ref{alg:approx_k-adapt} with $t=n$ scenarios selected in the first step.

All algorithms were implemented in Python 3.10 and all optimization problems were solved by Gurobi 10 \cite{gurobi10} with standard hyperparameter setup. Experiments were executed on a cluster with AMD Genoa 9654 (2x) 96 Cores/Socket 2.4GHz 360W CPU and with 24 x 16GiB 4800MHz, DDR5 RAM. 

\subsection{Instances}
We test the algorithm on the \textbf{minimum knapsack problem (KP)} which is given by
\[
\min_{\substack{a^\top x \ge b\\ \ x\in \{0,1\}^n}} \bar c^\top x
\]
where the instances were generated as in \cite{ChasseinGKP19,goerigk2020min}. For $n\in\{ 50,100\}$ we generate $10$ random knapsack instances where the mean-costs $\bar c_i$ and the weights $a_i$ were drawn from a uniform distribution on $\{1,\ldots ,100\}$ and the knapsack capacity $b$ was set to $35\%$ of the sum of all weights. For each knapsack instance we generated a budgeted uncertainty set 
\[
U=\left\{ c\in\R^n: c_i = \bar c_i + \delta_i \Delta_i, \ \sum_{i\in [n]} \delta_i\le \Gamma, \delta\in [0,1]^n\right\}
\]
where each $\Delta_i$ was drawn uniformly from $\{ 1,\ldots , \bar c_i\}$ with budget parameter $\Gamma\in \{ 2,5,10\}$.

For the \textbf{shortest path problem (SPP)} we use the original instances from \cite{hanasusanto2015k} which were also used in several other publications of the min-max-min literature. We consider random graphs $G=(V,E)$ with $|V|\in \{30,50\}$ nodes corresponding to points in the Euclidean plane with random coordinates in $[0,10]$. For each dimension we randomly select $10$ of the graphs generated in \cite{hanasusanto2015k}. We consider budgeted uncertainty sets described as above where the mean values $\bar c_{ij}$ on edge $(i,j)$ are set to the euclidean distance of node $i$ and $j$ and the deviation values are set to $\Delta_{ij}=\frac{\bar c_{ij}}{2}$. The budget parameter $\Gamma$ was chosen in $\in\{ 2,5,10\}$.

For the two-stage case we consider the \textbf{generic $k$-adaptability problem (GP)} studied in \cite{ghahtarani2023double} which is defined as
\begin{align*}
\min & \max_{\xi\in U} \min_{i=1,\ldots ,k} d^\top x + \xi^\top y^i\\
s.t. \quad &\sum_{i=1}^{m} x_i=10 \\
& f^\top y^i - a^\top x\ge 0 \quad i=1,\ldots ,k\\
& x\in \{ 0,1\}^m,y^1,\ldots ,y^k\in \{ 0,1\}^n
\end{align*}
where $x$ are the first-stage decision variables and $y^i$ the second-stage decision variables and $n=m=50$. The parameters $d_i$, $a_i$ and $f_i$ are uniformly and randomly generated in $[8,12]$, $[50,100]$ and $[80,90]$, respectively. We generate budgeted uncertainty sets as above with mean values $\bar \xi_i$ uniformly and randomly generated from $[8,12]$ and deviations values $\Delta_{i}=\frac{\bar \xi_{i}}{4}$. The budget parameter $\Gamma$ was chosen in $\in\{ 2,5,10\}$.

Finally we consider a \textbf{two-stage network construction variant of the shortest path problem (2SP)}. We consider the same instances as for the shortest path problem but study the $k$-adaptability problem
\begin{align*}
\min & \max_{\xi\in U} \min_{i=1,\ldots ,k} d^\top x + \xi^\top y^i \\
s.t. \quad & x\in \{ 0,1\}^E \\
& y^1,\ldots ,y^k\in Y_{\text{SP}}(x)
\end{align*}
where $Y_{\text{SP}}(x)$ is the set of paths in $G$ (encoded by their $0$-$1$ incidence vectors) where $y^i_{e}\le x_e$ for all $i\in [k]$. Hence, the task is to buy a set of edges in the first-stage (indicated by the decisions $x$) such that for every scenario $\xi\in U$ a good path $y^i$ exists which only uses edges bought in the first stage. This problem was already studied e.g. in \cite{goerigk2022data}. The first-stage costs are set to $d=\frac{1}{2}\min_{i=1,\ldots ,n}\bar \xi_i\1$ where $\bar \xi$ is the mean-cost vector of the budgeted uncertainty set. The latter choice is motivated by the observation that if the costs are set significantly smaller, the optimal solution is to buy all edges of the graph. On the other hand if the value is set significantly larger the optimal solution is to buy just one path in the graph, so in the second-stage there is no option to adapt to the scenarios.

\subsection{Computational Results}
In the following subsections we show the achieved optimality gaps and runtimes of Algorithm \ref{alg:approx} (and its variant) and Algorithm \ref{alg:approx_k-adapt} (and its variant) for the problem instances introduced in the previous subsection and for different values of $k$. All values are given as the average over $10$ random instances.

To obtain the optimality gap, we first run Algorithm \ref{alg:approx} (or Algorithm \ref{alg:approx_k-adapt}) and record the objective value $\text{obj}$ of the returned solution. Then we calculate a lowerbound for the KP and SPP instances by calculating the optimal value of the corresponding min-max-min problem for $k=n$ by applying the iterative method presented in Section \ref{sec:lowerbound_k-adapt} to the one-stage problems. For the two-stage problems GP and 2SP we apply the same algorithm presented in \ref{sec:lowerbound_k-adapt} to obtain a lower bound. The optimality gap is defined as
\[
\text{gap}:=\frac{\text{obj}-\text{lowerbound}}{\text{lowerbound}}\cdot 100.
\]

The runtimes are given in seconds. We set a timelimit of 1800 seconds for the problem to be solved in Step \ref{step:opt_lambda} of Algorithm \ref{alg:approx} (Variant) since in our experiments it was the only subproblem which sometimes lead to huge runtimes, due to the sparsity constraint. In case the timelimit is reached we take the best known solution provided by Gurobi.

\subsubsection{The Knapsack Problem}
In the following we show the results of Algorithm \ref{alg:approx} and Algorithm \ref{alg:approx} (Variant) on the KP instances described above. In Figure \ref{fig:kp_gaps} the optimality gaps are shown in $\%$ for different values of $k$ for the KP. The results show exactly the behaviour we analyzed in Section \ref{sec:kcloseton}, namely the optimality gap decreases for increasing $k$ and returns the optimal solution already for $k\ge 10$ for both instance sizes. For $k=4$ the optimality gap is at most $1.5\%$. The results show that the optimality gaps increase for larger uncertainty sets (i.e. larger $\Gamma$) and for a larger dimension of the problem. Furthermore Algorithm \ref{alg:approx} (Variant) performs significantly better with only small increases in computation times as shown in Figure \ref{fig:kp_times}. Both algorithms are able to achieve small optimality gaps in less than $1$ second. 

\begin{figure}[h!]
\centering
\includegraphics[width=0.48\textwidth]{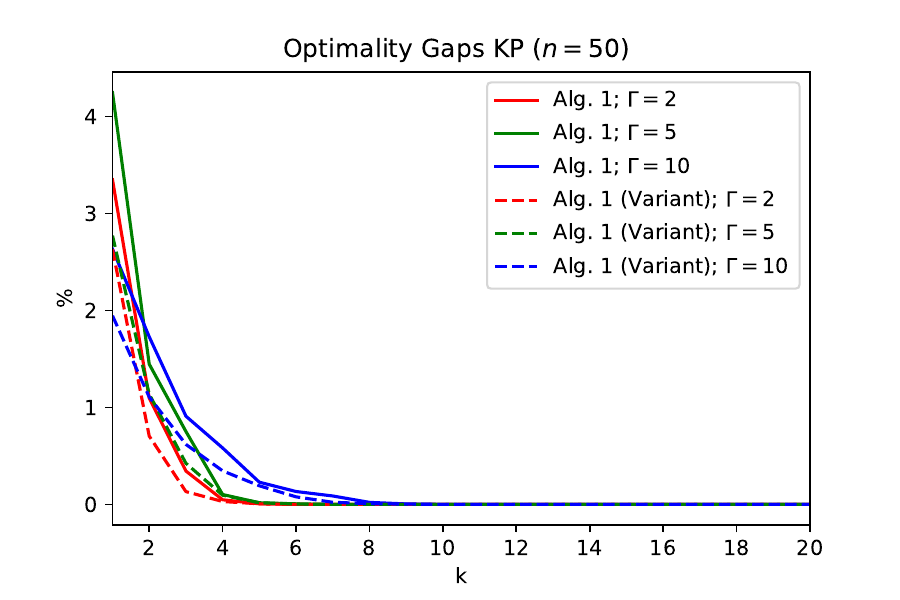}
\includegraphics[width=0.48\textwidth]{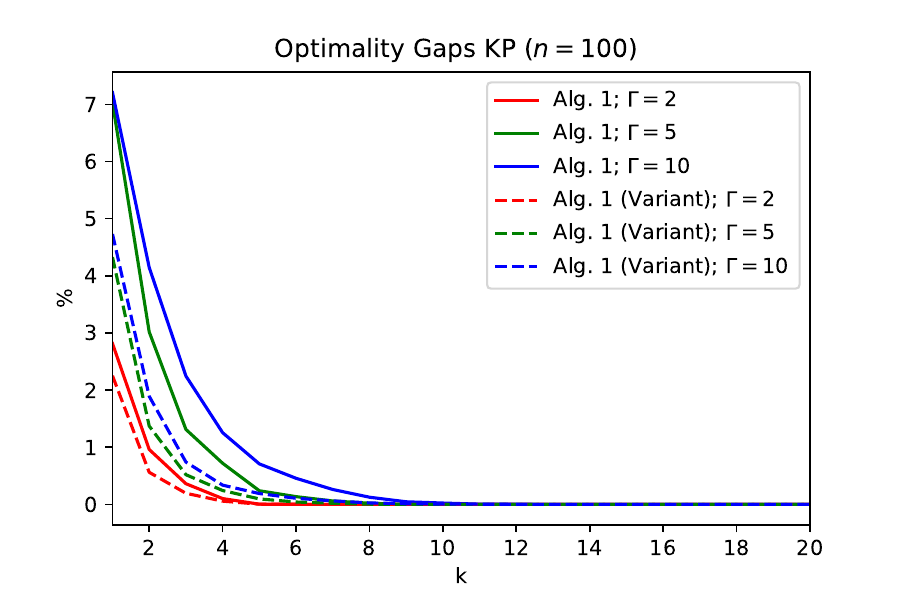}
\caption{Average optimality gaps for KP instances with $n=50$ (left) and $n=100$ (right) for Algorithm \ref{alg:approx} and Algorithm \ref{alg:approx} (Variant) for different values of $\Gamma$.}
\label{fig:kp_gaps}
\end{figure}

\begin{figure}[h!]
\centering
\includegraphics[width=0.48\textwidth]{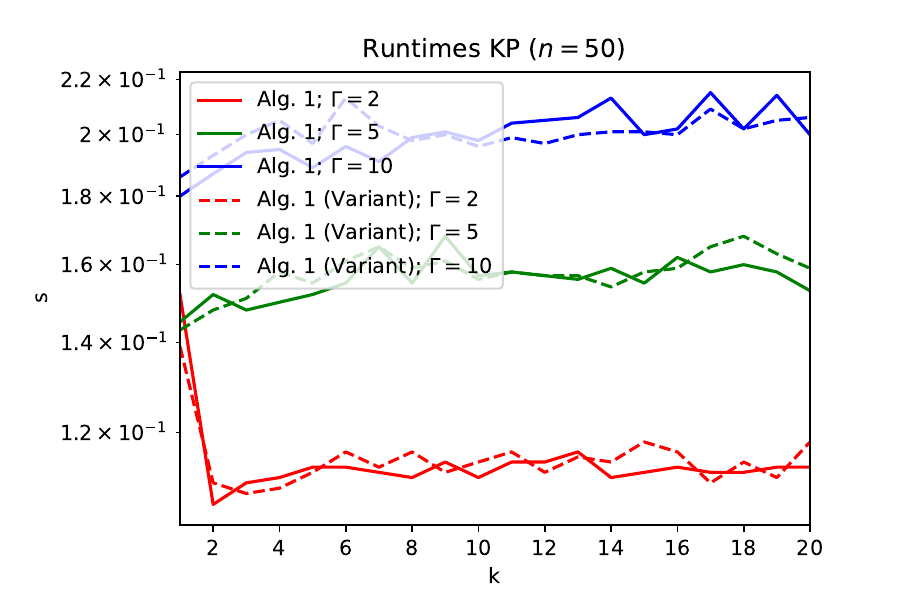}
\includegraphics[width=0.48\textwidth]{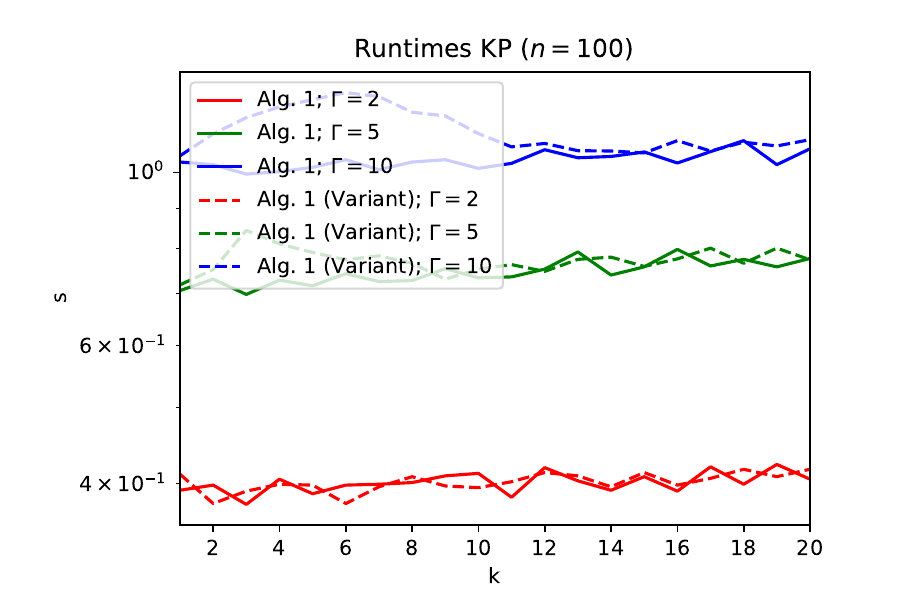}
\caption{Average runtimes for KP instances with $n=50$ (left) and $n=100$ (right) for Algorithm \ref{alg:approx} and Algorithm \ref{alg:approx} (Variant) for different values of $\Gamma$.}
\label{fig:kp_times}
\end{figure}

\subsubsection{Shortest Path Problem}
In the following we show the results of Algorithm \ref{alg:approx} and Algorithm \ref{alg:approx} (Variant) on the SPP instances described above. In Figure \ref{fig:spp_gaps} the optimality gaps are shown in $\%$ for different values of $k$ for the SPP. Similar to the previous section, the results show exactly the behaviour we analyzed in Section \ref{sec:kcloseton}, namely the optimality gap decreases for increasing $k$ and returns the optimal solution already for $k\approx 20$ for both instance sizes. Compared to KP the optimality gaps are larger for small $k$. The results show that the optimality gaps increase for larger uncertainty sets (i.e. larger $\Gamma$) and for a larger dimension of the problem. Furthermore Algorithm \ref{alg:approx} (Variant) performs significantly better. However this comes with larger computation times as shown in Figure \ref{fig:spp_times}. Interestingly the instances with midsize $k$ are the hardest instances for Algorithm \ref{alg:approx} (Variant) while for larger $k$ the runtime decreases again. Possibly this is because the sparsity constraint is redundant for larger $k$. Algorithm \ref{alg:approx} (Variant) is able to achieve optimality gaps below $10\%$ for $k=4$ and $\Gamma=5$ in less than $10$ seconds.

\begin{figure}[h!]
\centering
\includegraphics[width=0.48\textwidth]{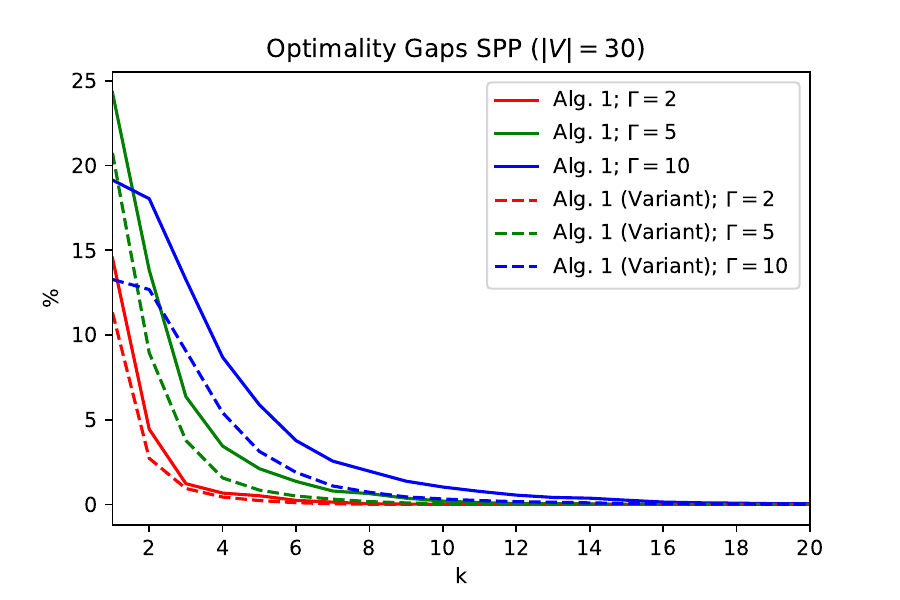}
\includegraphics[width=0.48\textwidth]{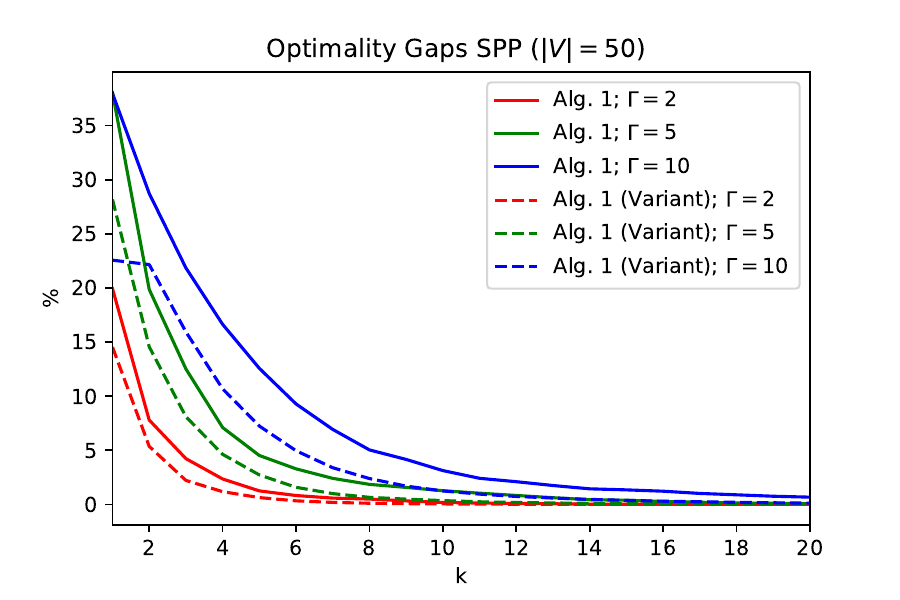}
\caption{Average optimality gaps for SPP instances with $30$ nodes (left) and $50$ nodes (right) for Algorithm \ref{alg:approx} and Algorithm \ref{alg:approx} (Variant) for different values of $\Gamma$.}
\label{fig:spp_gaps}
\end{figure}

\begin{figure}[h!]
\centering
\includegraphics[width=0.48\textwidth]{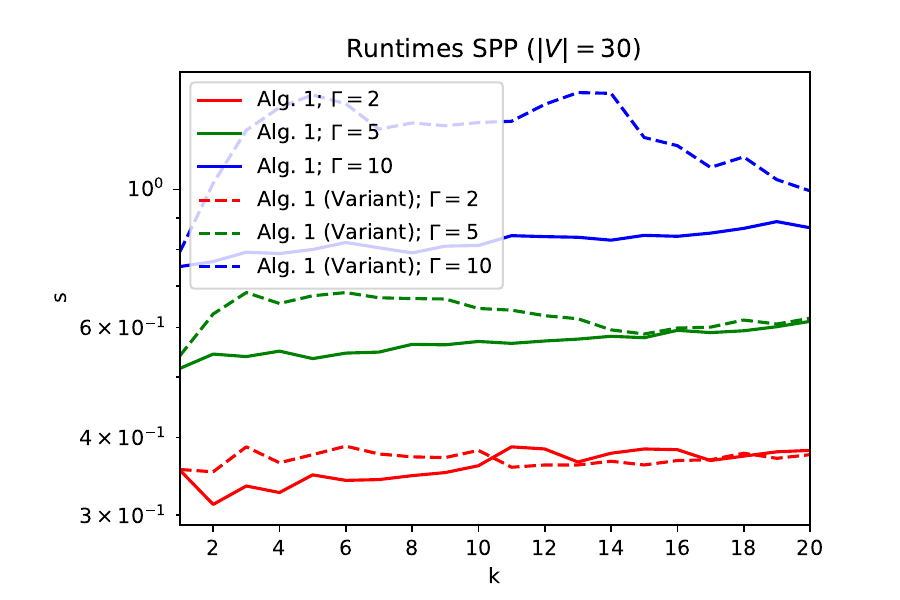}
\includegraphics[width=0.48\textwidth]{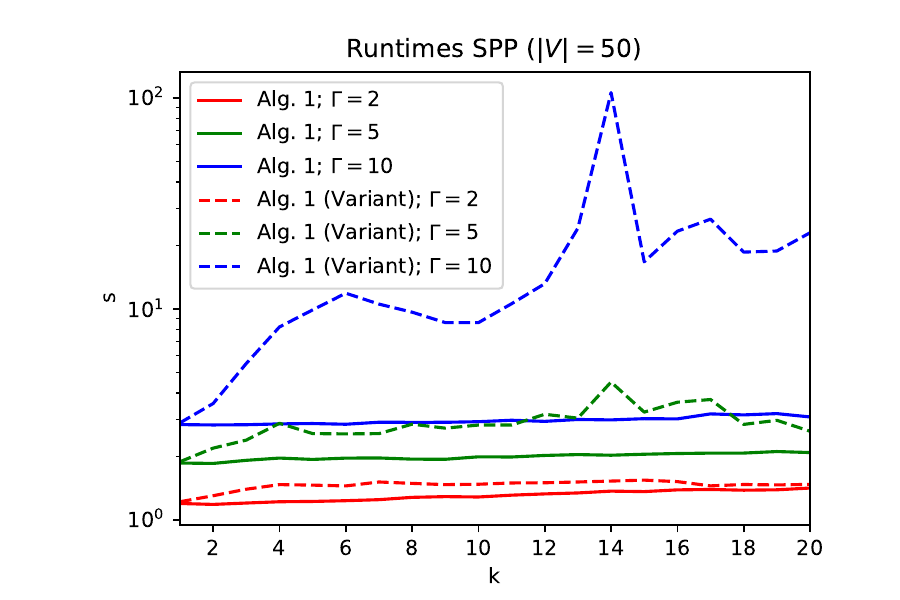}
\caption{Average runtimes for SPP instances with $30$ nodes (left) and $50$ nodes (right) for Algorithm \ref{alg:approx} and Algorithm \ref{alg:approx} (Variant) for different values of $\Gamma$.}
\label{fig:spp_times}
\end{figure}

\subsubsection{A Generative $K$-adaptability Problem}
In the following we show the results of Algorithm \ref{alg:approx_k-adapt} and Algorithm \ref{alg:approx_k-adapt} (Variant) on the GP instances described above. In Figure \ref{fig:gp_gaps} the optimality gaps are shown in $\%$ for different values of $k$. Similar to the previous section, the results show exactly the behaviour we analyzed in Section \ref{sec:kcloseton}, namely the optimality gap decreases for increasing $k$. However since the first-stage solution calculated by Algorithm \ref{alg:approx_k-adapt} is only an approximation, the optimality gap does not necessarily converge to zero for growing $k$ as it was the case in the previous sections. The optimality gaps for small $k$ are not larger than $5\%$. The results show that the optimality gaps increase for larger uncertainty sets (i.e. larger $\Gamma$) and for a larger dimension of the problem. Furthermore Algorithm \ref{alg:approx_k-adapt} (Variant) performs significantly better. This comes with a small increase in computation times as shown in Figure \ref{fig:gp_times}. Similar to SPP the instances with midsize $k$ are the hardest instances for Algorithm \ref{alg:approx} (Variant) while for larger $k$ the runtime decreases again. Possibly this is because the sparsity constraint is redundant for larger $k$. Algorithm \ref{alg:approx_k-adapt} (Variant) is able to achieve optimality gaps below $2\%$ for $k=4$ in less than $1000$ seconds.

\begin{figure}[h!]
\centering
\includegraphics[width=0.48\textwidth]{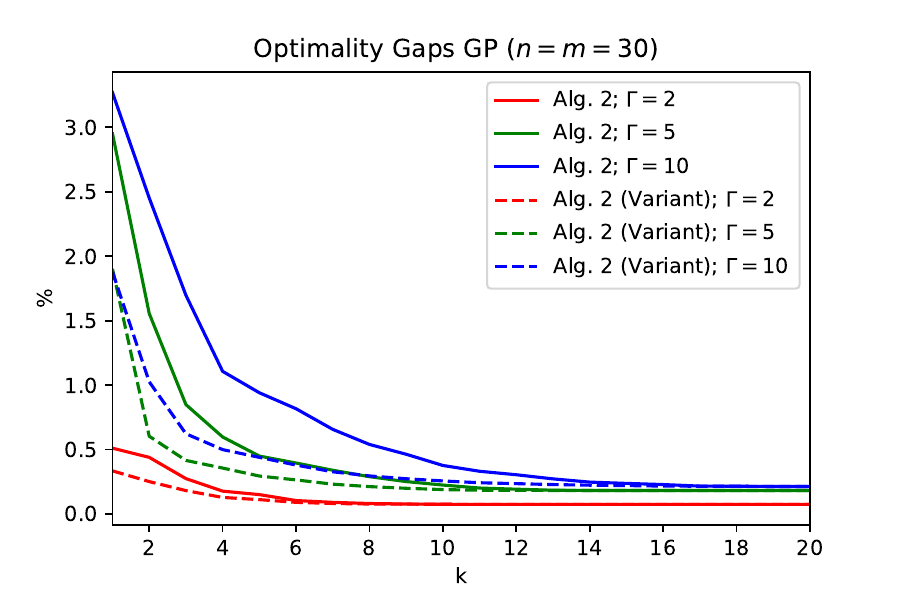}
\includegraphics[width=0.48\textwidth]{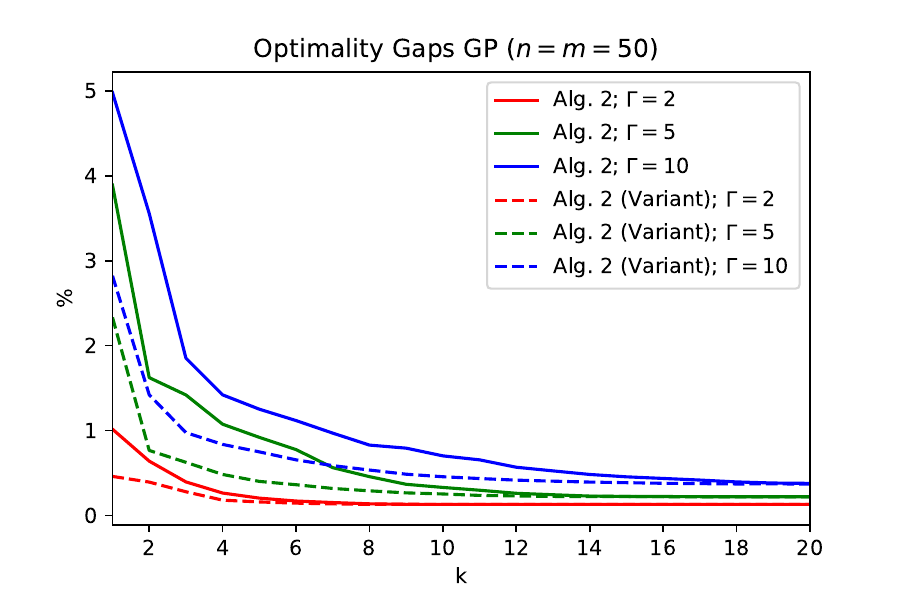}
\caption{Average optimality gaps for GP instances with $n=m=30$ (left) and $n=m=50$ (right) for Algorithm \ref{alg:approx_k-adapt} and Algorithm \ref{alg:approx_k-adapt} (Variant) for different values of $\Gamma$.}
\label{fig:gp_gaps}
\end{figure}

\begin{figure}[h!]
\centering
\includegraphics[width=0.48\textwidth]{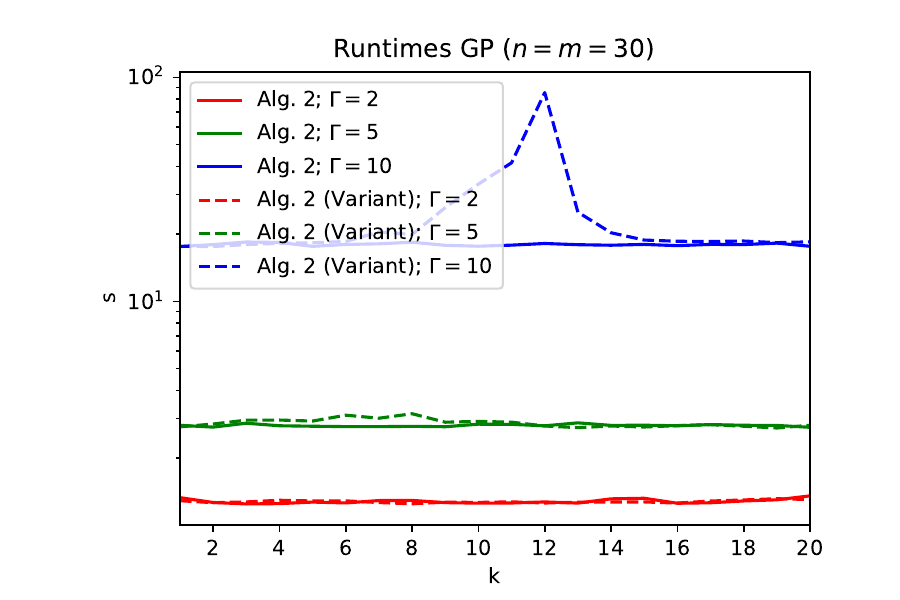}
\includegraphics[width=0.48\textwidth]{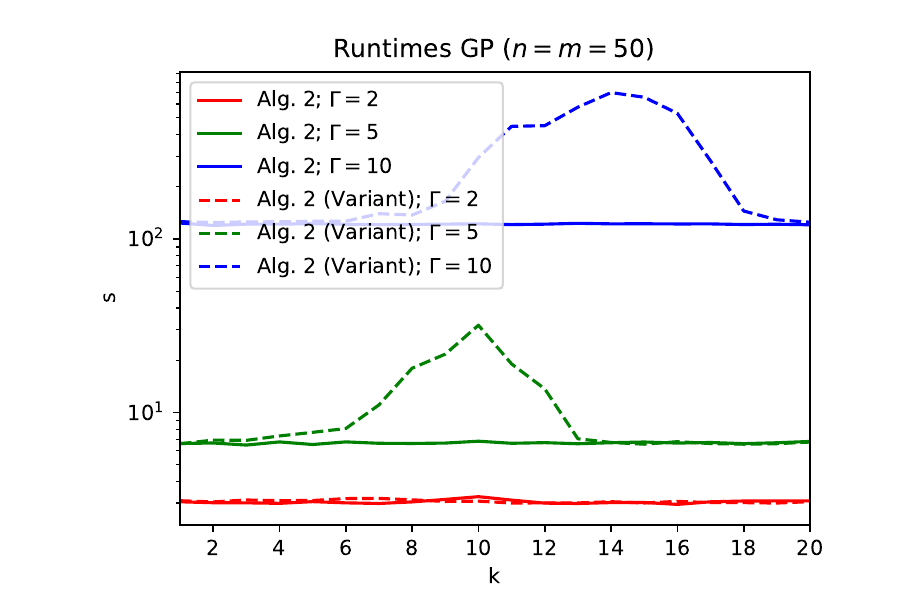}
\caption{Average runtimes for GP instances with $n=m=30$ (left) and $n=m=50$ (right) for Algorithm \ref{alg:approx_k-adapt} and Algorithm \ref{alg:approx_k-adapt} (Variant) for different values of $\Gamma$.}
\label{fig:gp_times}
\end{figure}

\subsubsection{Network Construction Shortest Path Problem}
In the following we show the results of Algorithm \ref{alg:approx_k-adapt} and Algorithm \ref{alg:approx_k-adapt} (Variant) on the 2SP instances described above. In Figure \ref{fig:2sp_gaps} the optimality gaps are shown in $\%$ for different values of $k$. The optimality gap decreases for increasing $k$. However since the first-stage solution calculated by Algorithm \ref{alg:approx_k-adapt} is only an approximation, the optimality gap does not necessarily converge to zero for growing $k$ as it was the case for the min-max-min instances. The approximate first-stage solution is significantly worse compared to GP and the optimality gaps remain large. Here better problem-specific approximation algorithms for finding good two-stage solutions have to be developed. The results show that the optimality gaps increase for larger uncertainty sets (i.e. larger $\Gamma$) and for a larger dimension of the problem. Both, Algorithm \ref{alg:approx_k-adapt} and  Algorithm \ref{alg:approx_k-adapt} (Variant) perform similarly in terms of optimality gap and runtimes. Interestingly, the runtimes for instances with $\Gamma=10$ are smaller for Algorithm \ref{alg:approx_k-adapt} (Variant). In total the runtime is always smaller than $600$ seconds.

\begin{figure}[h!]
\centering
\includegraphics[width=0.48\textwidth]{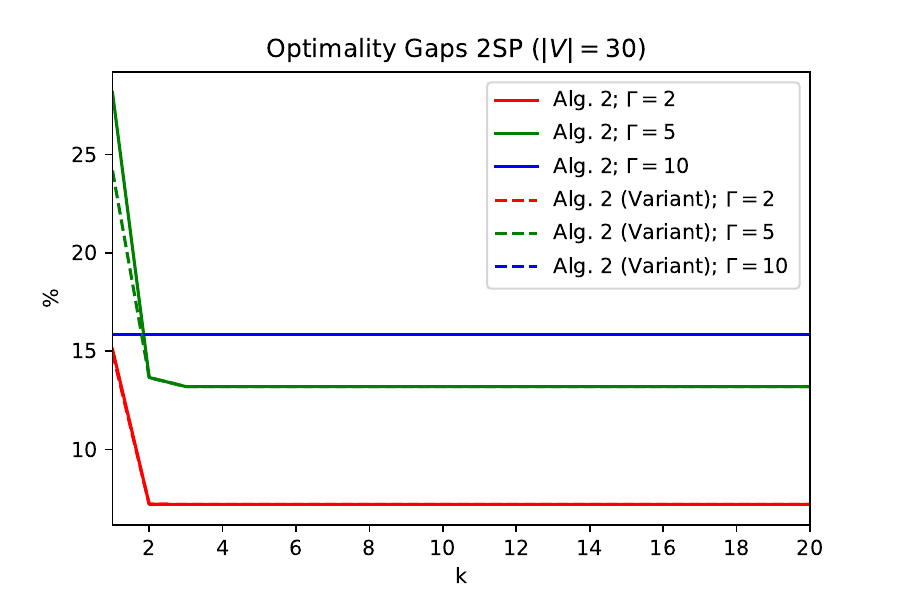}
\includegraphics[width=0.48\textwidth]{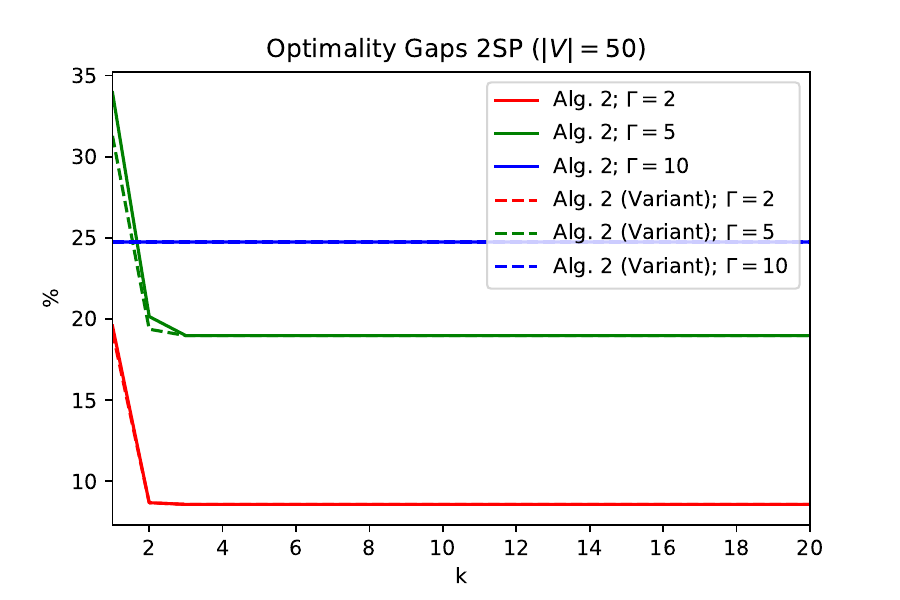}
\caption{Average optimality gaps for 2SP instances with $30$ nodes (left) and $50$ nodes (right) for Algorithm \ref{alg:approx_k-adapt} and Algorithm \ref{alg:approx_k-adapt} (Variant) for different values of $\Gamma$.}
\label{fig:2sp_gaps}
\end{figure}

\begin{figure}[h!]
\centering
\includegraphics[width=0.48\textwidth]{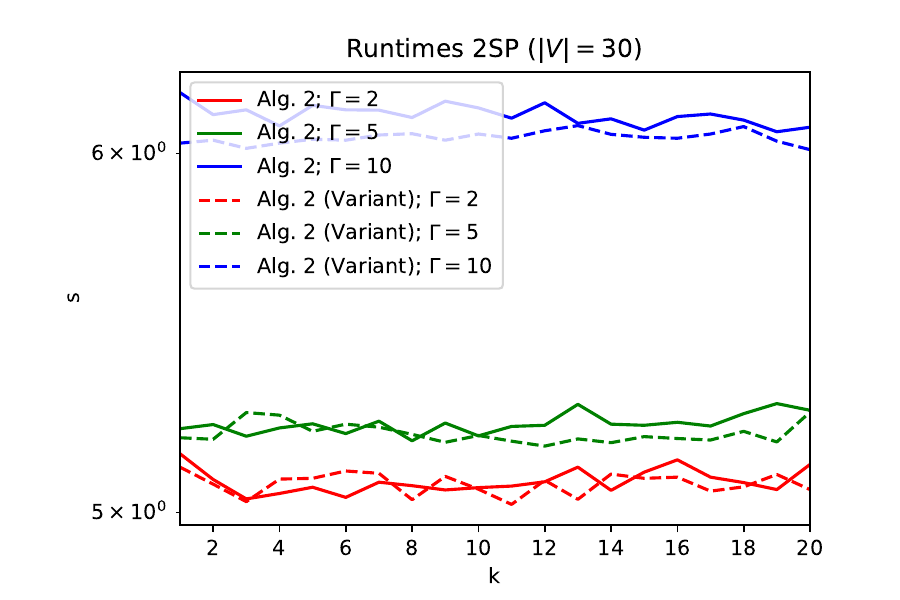}
\includegraphics[width=0.48\textwidth]{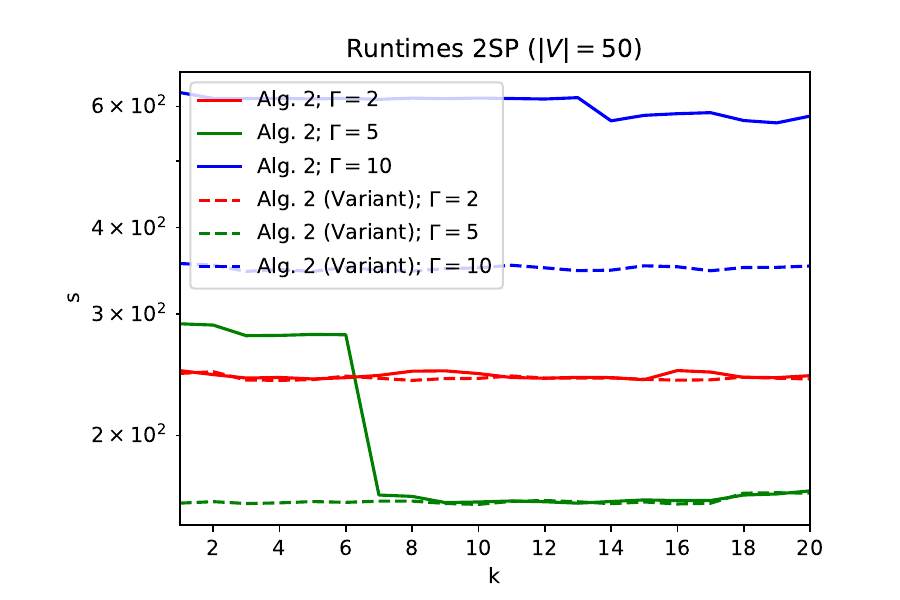}
\caption{Average runtimes for 2SP instances with $30$ nodes (left) and $50$ nodes (right) for Algorithm \ref{alg:approx_k-adapt} and Algorithm \ref{alg:approx_k-adapt} (Variant) for different values of $\Gamma$.}
\label{fig:2sp_times}
\end{figure}

\section{Conclusion}\label{sec:conclusion}
In this work we study the min-max-min robust problem for binary optimization problems with uncertain costs. We focus on the case when the number of calculated solutions $k$ is of intermediate size, i.e. either it is smaller but close to $n$ or a fraction of $n$. We present an algorithm with provable additive and multiplicative approximation guarantees (involving problem-specific parameters) for both cases. The derived approximation guarantees depend on the size of the uncertainty set and the dimension of the problem and can be large for small $k$, but converge quickly to zero with increasing $k$. For special cases the guarantees provide an constant approximation guarantee (independent of $n$). Furthermore we calculate the range of parameter $k$ for which a certain guarantee is achieved. We show that surprisingly if $k$ is close to $n$, the min-max-min problem remains theoretically tractable and can be solved exactly and approximately in polynomial time for a large set of combinatorial problems if some problem parameters are fixed.

We extend the previous results to derive an approximation algorithm for $k$-adaptability problems and to derive lower bounds on the number of second-stage policies $k$ which have to be used to achieve a certain approximation guarantee for the exact two-stage robust problem. We show that these bounds can also be used to approximate the recoverable robust problem. 

Finally, the theoretical insights are confirmed by computations on min-max-min and k-adaptability instances from the literature. The experiments show that the presented algorithm is able to calculate solutions with small optimality gap in seconds, where the optimality gaps quickly go to zero with increasing $k$.

The results of this paper indicate that solving the min-max-min problem (and to some extend the $k$-adaptability problem) gets easier if $k$ gets larger. This is in contrast to the results usually presented in the min-max-min and $k$-adaptability literature, where the number of solved instances (during a given timelimit) is often decreasing in $k$. However, if the approximation algorithms derived in this work (and the presented lower bounds) would be used to calculate solutions with good optimality guarantee, most of the known exact algorithms will scale much better for growing $k$ due to the presented behaviour of the approximation bounds. Hence, we strongly recommend to use the presented approximation algorithms and lower bound calculations. 

While the approximation quality in dependence of $k$ for objective uncertainty is now well understood, for future work it would be interesting to derive similar results for the case that uncertain parameters are also allowed to appear in the constraints. However, the analysis presented in this work cannot be directly extended to this case and a completely new approach has to be developed.

\bibliographystyle{abbrv}      
\bibliography{references} 

\section*{Appendix}
\subsection*{Proof of Lemma \ref{lem:bound_opts_optk_k-adapt}}
\begin{proof}
Let $x^*(k)\in X$, $y^*(k)=\sum_{i\in [k]}\lambda_i y^i$ be an optimal solution of Problem \eqref{eq:k-adaptability} with parameter $k$. We may assume without loss of generality that $\lambda_1\ge \ldots \ge \lambda_k$. Define a solution $x(s)$, $y(s)$ of the $s$-adaptable problem by setting $x^*(s)=x^*(k)$ and 
\begin{equation}\label{eq:define_s_solution}
y(s):=\sum_{i\in [s-1]} \lambda_i y^i + \left(\sum_{i=s}^{k}\lambda_i\right) y^s.
\end{equation}
The latter solution is clearly feasible for the $s$-adaptable problem and hence 
 \[\adapt(s)\le \max_{\xi\in U}d^\top x(s) + \xi^\top y(s).\]
Furthermore let $\xi^*(s)\in \argmax_{\xi\in U}d^\top x(s) + \xi^\top y(s)$. It follows
\begin{align*}
\adapt(s)-\adapt(k) & \le \max_{\xi\in U}d^\top x(s) + \xi^\top y(s) - (\max_{\xi\in U}d^\top x^*(k) + \xi^\top y^*(k)) \\
& \le \xi^*(s)^\top \left( y(s)-y^*(k)\right) \\
& = \sum_{i=s+1}^{k}\lambda_i \xi^*(s)^\top \left( y^s-y^i\right)\\
& \le M(n) \left( \sum_{i=s+1}^{k}\lambda_i \right)
\end{align*}
where the second inequality holds since $\xi^*(s)$ is a subgradient of the function $g(y)=\max_{\xi\in U}d^\top x + \xi^\top y$ in  $y(s)$ and since $x^*(k)=x(s)$. For the first equality we used the definition of $y(s)$ and $y^*(k)$ and for the last inequality we used the assumption $\underline{p}(n)\le \|y\|_0 \le \bar p(n)$ and $m_\infty \le \xi^*(s)_j\le M_\infty$ for all $j\in [n]$. Due to the sorting $\lambda_1\ge \ldots \ge \lambda_k$ and since the sum over all $\lambda_i$ is one, we have $\lambda_i\le \frac{1}{i}$ and hence
\begin{align*}
M(n) \left( \sum_{i=s+1}^{k}\lambda_i \right) &\le M(n) \left( \sum_{i=s+1}^{k}\frac{1}{i} \right)\\
& \le M(n) \frac{k-s}{s+1}
\end{align*}
which proves the result.
\end{proof}

\end{document}